\newtheorem{thm}{Theorem}
\newtheorem*{thm*}{Theorem}
\newtheorem{lem}{Lemma}
\newtheorem*{lem*}{Lemma}
\newtheorem{prop}{Proposition}
\newtheorem*{con*}{Conjecture}
\theoremstyle{definition}
\newtheorem{defn}{Definition}
\newtheorem*{ex*}{Example}
\newtheorem{cons}{Construction}
\newtheorem*{cons*}{Construction}
\newtheorem{rem}{Remark}
\theoremstyle{remark}
\newtheorem*{not*}{Notation}
\DeclareMathOperator{\conv}{conv}
\DeclareMathOperator{\des}{des}
\DeclareMathOperator{\Sym}{Sym}
\begin{document}

\title{Upper and lower bound theorems for graph-associahedra.}%
\author{V.~M.~Buchstaber \and V.~D.~Volodin}%
\date{}

\maketitle
\begin{abstract}
From the paper of the first author it follows that upper and lower bounds for $\gamma$-vector of a simple polytope imply the bounds for its $g$-,$h$- and $f$-vectors. In the paper of the second author it was obtained unimprovable upper and lower bounds for $\gamma$-vectors of flag nestohedra, particularly, Gal's conjecture was proved for this case. In the present paper we obtain unimprovable upper and lower bounds for $\gamma$-vectors (consequently, for $g$-,$h$- and $f$-vectors) of graph-associahedra and some its important subclasses. We use the constructions that for an $(n-1)$-dimensional graph-associahedron $P_{\Gamma_n}$ give the $n$-dimensional graph-associahedron $P_{\Gamma_{n+1}}$ that is obtained from the cylinder $P_{\Gamma_n}\times I$ by sequential shaving some facets of its bases. We show that the well-known series of polytopes (associahedra, cyclohedra, permutohedra and stellohedra) can be derived by these constructions. As a corollary we obtain inductive formulas for $\gamma$- and $h$- vectors of the mentioned series. These formulas communicate the method of differential equations developed by the first author with the method of shavings developed by the second author.
\end{abstract}

\section{Introduction}

Simple polytopes play important role in toric geometry and topology (see \cite{BR}). The classical problem of upper and lower bounds for $h$-vectors of $n$-dimensional simple polytopes with fixed number of facets is solved in \cite{Ba1}, \cite{Ba2} and \cite{Mc}.

Nowadays there appeared an important subclass of simple polytopes - Delzant polytopes. For every Delzant polytope $P^n$ there exists a Hamiltonian toric manifold $M^{2n}$ such that $P^n$ is the image of the moment map (see \cite{CdS}, \cite{D}). Davis-Januszkiewicz theorem (see \cite{DJ}) states that odd Betti numbers $b_{2i-1}(M^{2n})$ are zero and even Betti numbers $b_{2i}(M^{2n})$ are equal to components $h_i(P^n)$ of the $h$-vector of $P^n$. So, the problem of upper and lower bounds for $h$-vectors of Delzant polytopes become actual, because its solution gives upper and lower bounds for Betti numbers of Hamiltonian toric manifolds.

Feichtner and Sturmfels (see \cite{FS}) and Postnikov (see \cite{P}) showed that the Minkowski sum
of some set of regular simplices is a simple polytope if this set
satisfies certain combinatorial conditions identifying it as a buiding
set. The resulting family of simple polytopes was called nestohedra in
\cite{PRW} because of their connection to nested sets considered by De Concini
and Procesi (see \cite{DP}) in the context of subspace arrangements. Note that from results of \cite{FS} directly follows that nestohedra are Delzant polytopes. Special cases of
building sets are vertex sets of connected subgraphs in a given graph; the
corresponding nestohedra called graph-associahedra by Carr and Devadoss were first~studied~in~\cite{CD},\cite{DJS},\cite{P},\cite{TL},\cite{Ze}.

The main goal of this paper is to establish upper and lower bounds for $f$-,$g$-,$h$- and $\gamma$-vectors of graph-associahedra and some its important subclasses.

From \cite{FM} we know that if $B_1\subseteq B_2$ for connected building sets, then $P_{B_2}$ is obtained from $P_{B_1}$ by sequential shaving some faces, consequently, $h_i(P_{B_1})\leq h_i(P_{B_2})$. Therefore, $h_i(\Delta^n)\leq h_i(P_B)\leq h_i(Pe^n)$ for every $n$-dimensional nestohedron $P_B$ and these bounds are unimprovable.

In the combinatorics of simple polytopes especially interested is $\gamma$-vector. Using \cite{Bu1} and definitions of $g$-,$h$- and $f$-vectors one can prove
that componentwise inequality $\gamma(P_1)\leq\gamma(P_2)$ for simple $n$-polytopes $P_1$ and $P_2$ implies componentwise inequalities:$\quad g(P_1)\leq g(P_2),\quad h(P_1)\leq h(P_2),\quad f(P_1)\leq f(P_2)$.

Gal's conjecture (see \cite{G}) states that flag simple polytopes have nonnegative $\gamma$-vectors. In \cite{Bu2} it was described realization of the associahedron as a polytope obtained from the standard cube by shaving faces of codimension 2. The main result of \cite{V1,V2} is that every flag nestohedron has such a realization. As a corollary it was derived that unimprovable bounds for $\gamma$-vectors of flag nestohedra are $\gamma(I^n)$ and $\gamma(Pe^n)$. That includes Gal's conjecture for flag nestohedra, since $\gamma_i(I^n)=0, i>0$.

There are remarkable series of graph-associahedra corresponding to series of graphs: associahedra $As^n$ (path graphs), cyclohedra $Cy^n$ (cyclic graphs), permutohedra $Pe^n$ (complete graphs) and stellohedra $St^n$ (star graphs). Using these series we obtain the main result of the paper:

\begin{thm*} There are following unimprovable bounds:
\begin{enumerate}
\item[1)] $\gamma_i(As^n)\leq\gamma_i(P_{\Gamma_{n+1}})\leq\gamma_i(Pe^n)$ for any connected graph $\Gamma_{n+1}$ on $[n+1]$;
\item[2)] $\gamma(Cy^n)\leq\gamma_i(P_{\Gamma_{n+1}})\leq\gamma_i(Pe^n)$ for any Hamiltonian graph $\Gamma_{n+1}$ on $[n+1]$;
\item[3)] $\gamma_i(As^n)\leq\gamma_i(P_{\Gamma_{n+1}})\leq\gamma_i(St^n)$ for any tree $\Gamma_{n+1}$ on $[n+1]$.
\end{enumerate}
\end{thm*}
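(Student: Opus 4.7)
The plan is to prove all three pairs of bounds by induction on $n$, using the cylinder--shaving construction announced in the introduction: if $v$ is a non-cut vertex of $\Gamma_{n+1}$ with neighborhood $N\subseteq[n]$ in the remaining graph $\Gamma_n$, then $P_{\Gamma_{n+1}}$ is realized as $P_{\Gamma_n}\times I$ with a sequence of facets of the top base shaved off, the sequence being indexed by a prescribed collection of connected subsets of $\Gamma_n$ determined by $N$.

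First I would record the induced formula for the $\gamma$-vector. Because $P_{\Gamma_n}$ is flag, so is $P_{\Gamma_n}\times I$, and every codimension-$2$ shaving of a flag polytope contributes componentwise nonnegatively to $\gamma$. Summing the contributions of the successive shavings gives an identity of the form
\begin{equation*}
\gamma(P_{\Gamma_{n+1}}) \;=\; \gamma(P_{\Gamma_n}\times I) \;+\; \Sigma(\Gamma_n,N),
\end{equation*}
where $\Sigma(\Gamma_n,N)$ is a componentwise nonnegative vector depending only on the shaving list. This is the bridge, promised in the abstract, between the first author's differential-equation method and the second author's shaving method.

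Second I would prove the key monotonicity lemma: if $N\subseteq N'$, then the shaving list for $N$ is contained in that for $N'$, so $\Sigma(\Gamma_n,N)\le\Sigma(\Gamma_n,N')$ componentwise. Together with the induction hypothesis, this reduces each of the three bounds to identifying the two extremal attachment sequences inside the relevant class of graphs.

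Third I would carry out the extremal analysis class by class. In 1), maintaining connectivity forces $|N|\ge 1$ at each step, with the minimum realized by attaching to the end of a path (reproducing $As^n$) and the maximum $|N|=n$ realized by the complete graph (reproducing $Pe^n$). In 2), any Hamiltonian graph on $[n+1]$ contains a cycle vertex whose removal leaves a Hamiltonian graph on $[n]$, and the minimal attachment glues the new vertex to the two endpoints of the residual path, reproducing the cycle graph and hence $Cy^n$. In 3), leaf removal reduces any tree to a smaller tree with $|N|=1$ throughout; within this restricted class the shaving list is still monotone in the graph-theoretic centrality of the single attaching vertex, the two extremes being an endpoint of the path ($As^n$) and the center of the star ($St^n$). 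Unimprovability is then automatic because the bounds are attained by the named polytopes. The main obstacle is the first step: pinning down the shaving list explicitly and verifying its componentwise monotonicity in $N$; once this is in place, the extremal analysis for each class is a transparent combinatorial exercise.
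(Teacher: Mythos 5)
Your high-level instinct---use the cylinder-plus-codimension-2-shaving construction inductively---matches one ingredient of the paper, but the proposal as written has several genuine gaps, and three of the six inequalities need a different tool entirely.

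\textbf{The monotonicity lemma does not suffice for the upper bound $\gamma_i(P_{\Gamma_{n+1}})\leq\gamma_i(Pe^n)$.} Your lemma compares $\Sigma(\Gamma_n,N)$ to $\Sigma(\Gamma_n,N')$ for $N\subseteq N'$ and \emph{the same} $\Gamma_n$. But in the induction, the polytope $Pe^n$ is built from $K_n\times I$ while $P_{\Gamma_{n+1}}$ is built from $\Gamma_n\times I$ for a possibly different $\Gamma_n$; the shaved-off faces $P_{\Gamma_n|_S}\times P_{\Gamma_n/S}$ and $P_{K_n|_S}\times P_{K_n/S}$ are different polytopes, so you cannot conclude by list containment alone. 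You would also need to compare $\gamma(P_{\Gamma_n})$ with $\gamma(Pe^{n-1})$, which is circular. The paper avoids the issue entirely: $B(\Gamma_{n+1})\subseteq B(K_{n+1})$, both are flag, and Theorem~\ref{leq} gives $\gamma(P_{\Gamma_{n+1}})\leq\gamma(Pe^n)$ in one step. Note also that Construction~\ref{ind} requires the attaching set $V$ to induce a complete subgraph (so that decompositions in $B(\Gamma_{n+1})\setminus B_1$ have exactly two blocks and the shaved face has codimension $2$); you cannot freely take $|N|=n$ unless $\Gamma_n=K_n$.

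\textbf{The Hamiltonian case is argued incorrectly.} You claim that a Hamiltonian graph on $[n+1]$ has a vertex whose removal leaves a Hamiltonian graph on $[n]$. This is false: for $\Gamma_{n+1}=C_{n+1}$ itself, removing any vertex gives the path $L_n$, which is not Hamiltonian, so the induction breaks at the base of the class. The paper instead observes that Hamiltonicity gives $C_{n+1}\subseteq\Gamma_{n+1}$, hence $B(C_{n+1})\subseteq B(\Gamma_{n+1})$, and Theorem~\ref{leq} finishes it immediately. (The cylinder construction \emph{is} used for $Cy^n$, but only to derive its inductive formula for $\gamma$ and $H$, via a different substitution $B(C_n)(1,\dots,n-1,J(n,n+1))$ and shaving from both bases---not to prove the comparison.)

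\textbf{The tree upper bound needs a quantitative comparison, not a ``centrality'' heuristic.} In Lemma~\ref{st} the paper fixes a leaf (so $|N|=1$ automatically) and compares \eqref{formulas} to \eqref{stellohedra} term by term: for each size $i$ there are at most $\binom{n-1}{i-1}$ admissible $S$, and each contribution is bounded by $\gamma(St^{i-1})\gamma(Pe^{n-i-1})$---the first factor by the inductive hypothesis on trees, the second by Theorem~\ref{leq} since $\Gamma_n/S$ is merely connected. Your appeal to monotonicity ``in the centrality of the attaching vertex'' is not a proof and in fact requires exactly this two-factor bound, one factor of which falls outside the tree class. The same structure appears in Lemma~\ref{as} for the lower bound: first reduce from connected graphs to trees via a spanning tree and Theorem~\ref{leq}, then induct on a leaf, using that $\mathcal{S}$ contains at least one set of each size $i$ and that each summand dominates $\gamma(As^{i-1})\gamma(As^{n-i-1})$ by induction. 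Your outline captures the $As^n$ case qualitatively, but without the reduction to trees and the termwise comparison it is not yet an argument.

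In short: of the six inequalities, the three lower bounds by $As^n$, $Cy^n$, and the upper bound by $Pe^n$ are proved in the paper by building-set inclusion and Theorem~\ref{leq}, not by the shaving induction; only the $As^n$ and $St^n$ comparisons for trees use Construction~\ref{ind}, and there the key is a termwise comparison of the sums in \eqref{formulas}, \eqref{associahedra}, \eqref{stellohedra}, not a monotonicity of shaving lists.
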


The last part was predicted in \cite[Conjecture 14.1]{PRW}, where it was calculated $\gamma$-vectors of trees on 7 nodes and it was noticed that
more branched and forked trees give polytopes with higher $\gamma$-vectors.

We use the constructions that for an $(n-1)$-dimensional graph-associahedron $P_{\Gamma_n}$ produce the\\ $n$-dimensional graph-associahedron $P_{\Gamma_{n+1}}$ that is obtained from the cylinder $P_{\Gamma_n}\times I$ by sequential shaving some facets of its bases. We show that the mentioned series of polytopes (associahedra, cyclohedra, permutohedra and stellohedra) can be derived by these constructions. As a corollary we obtain inductive formulas for $\gamma$- and $h$- vectors of the above series. These formulas communicate the method of differential equations developed in \cite{Bu1} with the method of shavings developed in \cite{V1,V2}.

\section{Face polynomials}
The convex $n$-dimensional polytope $P$ is called \emph{simple} if its every vertex belongs to exactly $n$ facets.\medskip\\
Let $f_i$ be the number of $i$-dimensional faces of an $n$-dimensional polytope $P$. The vector $(f_0,\ldots,f_n)$ is called the $f$-vector of $P$. The $F$-polynomial of $P$ is defined by:
\begin{equation*}
F(P)(\alpha,t)=\alpha^n+f_{n-1}\alpha^{n-1}t+\dots +f_1\alpha t^{n-1}+f_0 t^n.
\end{equation*}
The $h$-vector and $H$-polynomial of $P$ are defined by:
\begin{equation*}
H(P)(\alpha,t)=h_0\alpha^n+h_1\alpha^{n-1}t+\dots+h_{n-1}\alpha t^{n-1}+h_n t^n=F(P)(\alpha-t,t).
\end{equation*}
The $g$-vector of a simple polytope $P$ is the vector $(g_0,g_1,\dots,g_{[\frac{n}{2}]})$, where $g_0=1,\quad g_i=h_i-h_{i-1}, i>0$.\medskip\\
The Dehn-Sommerville equations (see \cite{Zi}) state that $H(P)$ is symmetric for any simple polytope. Therefore, it can be represented as a polynomial of $a=\alpha+t$ and $b=\alpha t$:
\begin{equation*}
H(P)=\sum\limits_{i=0}^{[\frac{n}{2}]}\gamma_i(\alpha t)^i(\alpha+t)^{n-2i}.
\end{equation*}
The $\gamma$-vector of $P$ is the vector $(\gamma_0,\gamma_1,\dots,\gamma_{[\frac{n}{2}]})$. The $\gamma$-polynomial of $P$ is defined by:
\begin{equation*}
\gamma(P)(\tau)=\gamma_0+\gamma_1\tau+\dots+\gamma_{[\frac{n}{2}]}\tau^{[\frac{n}{2}]}.
\end{equation*}

\begin{lem}\label{g}
Let $\gamma_i(P_1)\leq\gamma_i(P_2), i=0,\ldots,[\frac{n}{2}]$, where $P_1$ and $P_2$ are simple $n$-polytopes, then
\begin{enumerate}
\item[1)] $g_i(P_1)\leq g_i(P_2)$;
\item[2)] $h_i(P_1)\leq h_i(P_2)$;
\item[3)] $f_i(P_1)\leq f_i(P_2)$.
\end{enumerate}
\end{lem}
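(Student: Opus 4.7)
The plan is to show that each of $h_i(P)$, $g_i(P)$, and $f_i(P)$ can be expressed as a nonnegative integer linear combination of the components $\gamma_0(P),\ldots,\gamma_{[n/2]}(P)$, with coefficients that depend only on $n$ and $i$. Once this is in place, the componentwise inequality $\gamma(P_1)\leq\gamma(P_2)$ immediately yields all three desired conclusions term by term.

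For the $h$-vector, I would take the defining identity $H(P)=\sum_{i=0}^{[n/2]}\gamma_i(P)(\alpha t)^i(\alpha+t)^{n-2i}$ and expand $(\alpha+t)^{n-2i}$ by the binomial theorem; reading off the coefficient of $\alpha^{n-j}t^j$ gives $h_j(P)=\sum_i \gamma_i(P)\binom{n-2i}{j-i}$, which is manifestly a nonnegative integer combination of the $\gamma_i(P)$ and proves part~(2). For the $f$-vector, I would use the inverse substitution $F(P)(\alpha,t)=H(P)(\alpha+t,t)$, which follows formally from $H(P)(\alpha,t)=F(P)(\alpha-t,t)$; expanding yields $f_{n-k}(P)=\sum_j h_j(P)\binom{n-j}{k-j}$, and composing with the previous formula expresses each $f_i(P)$ as a nonnegative integer combination of the $\gamma_i(P)$, giving part~(3).

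The $g$-vector case is the only one that needs a short verification beyond a direct substitution. Using $g_j=h_j-h_{j-1}$ for $1\leq j\leq[n/2]$ together with the formula for $h_j$, the coefficient of $\gamma_i(P)$ in $g_j(P)$ equals $\binom{n-2i}{j-i}-\binom{n-2i}{j-i-1}$. For $0\leq i\leq j\leq[n/2]$ one has $2(j-i)\leq n-2i$, so $j-i\leq[(n-2i)/2]$, placing $j-i$ in the weakly increasing half of row $n-2i$ of Pascal's triangle and making the difference nonnegative. This proves part~(1). The potentially tricky step is the binomial inequality above, but it reduces to the standard unimodality of rows of Pascal's triangle, so no real obstacle arises.
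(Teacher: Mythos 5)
Your argument is correct, and it reaches the same conclusion by a slightly different route. The paper proves (1) first by citing the formula
\begin{equation*}
g_i(P)=(n-2i+1)\sum_{j=0}^i \frac{1}{n-i-j+1}\binom{n-2j}{i-j}\gamma_j(P)
\end{equation*}
from [Bu1], whose coefficients are visibly nonnegative for $i\le[\frac{n}{2}]$, and then deduces (2) from (1) via $h_i=\sum_{j\le i}g_j$ and (3) from (2) via $f_i=\sum_{j\ge i}\binom{j}{i}h_{n-j}$. You instead go $\gamma\to h$ directly by expanding $(\alpha+t)^{n-2i}$, getting $h_j=\sum_i\binom{n-2i}{j-i}\gamma_i$, and then branch off to $g$ by subtracting consecutive $h$'s and to $f$ by the same substitution the paper uses. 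The two are in fact equivalent: the paper's [Bu1] coefficient $(n-2i+1)\frac{1}{n-i-j+1}\binom{n-2j}{i-j}$ simplifies to exactly your $\binom{n-2j}{i-j}-\binom{n-2j}{i-j-1}$. What your route buys is self-containedness: you avoid the external citation for the $g$-formula and reduce the only nontrivial nonnegativity check to the unimodality of rows of Pascal's triangle, which you justify correctly (indeed $j\le[\frac{n}{2}]$ gives $2(j-i)\le n-2i$, so $j-i$ lies in the nondecreasing half of row $n-2i$). The paper's route is shorter on the page only because it defers the computation to [Bu1].
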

\begin{proof}
The following formula for simple $n$-polytopes (see \cite{Bu1}) implies part 1).
\begin{equation*}
g_i(P)=(n-2i+1)\sum_{j=0}^i \frac{1}{n-i-j+1}\binom{n-2j}{i-j}\gamma_j(P).
\end{equation*}
Next formulas derived from definitions of $g$- and $h$-vectors show that 1) implies 2) and 2) implies 3).
\begin{equation*}
h_i(P)=\sum_{j=0}^i g_j(P);\qquad f_i(P)=\sum_{j=i}^n \binom{j}{i} h_{n-j}(P).
\end{equation*}
\end{proof}

\section{Nestohedra and graph-associahedra}
In this section we state well-known facts about nestohedra. They can be found in \cite{FS},\cite{P},\cite{Ze}.
\begin{not*}
    By $[n]$ and $[i,j]$ denote the sets $\{1,\ldots,n\}$ and $\{i,\ldots,j\}$.
\end{not*}

\begin{defn}
A collection $B$ of nonempty subsets of $[n+1]$ is called a \emph{building set} on $[n+1]$ if the following conditions hold:
\begin{enumerate}
\item[1)]If $S_1,S_2\in B$ and $S_1\cap S_2\neq\emptyset$, then $S_1\cup S_2\in B$;
\item[2)]$\{i\}\in B$ for every $i\in[n+1]$.
\end{enumerate}
The building set $B$ is \emph{connected} if $[n+1]\in B$.
\end{defn}

\noindent
The \emph{restriction} of the building set $B$ to $S\in B$ is the following building set on $[|S|]$:
\begin{equation*}
B|_S=\{S'\in B\colon S'\subseteq S\}.
\end{equation*}
The \emph{contraction} of the building set $B$ along $S\in B$ is the following building set on $[n+1-|S|]$:
\begin{equation*}
B/S=\{S'\subseteq [n+1]\setminus S\colon S'\in B\textmd{ or } S'\cup S\in B\}=\{S'\setminus S, S'\in B\}.
\end{equation*}

\begin{defn}
Let $\Gamma$ be a graph with no loops or multiple edges on the node set $[n+1]$. The \emph{graphical building set} $B(\Gamma)$ is the collection of nonempty subsets $S\subseteq[n+1]$ such that the induced subgraph $\Gamma|_S$ on the node set $S$ is connected.
\end{defn}

\begin{rem}
Building set $B(\Gamma)$ is connected if and only if $\Gamma$ is connected.
\end{rem}

\begin{rem}\label{rem}
Let $\Gamma$ be a connected graph on $[n+1]$ and $S\in B(\Gamma)$, then $B|_S$ and $B/S$ are both graphical building sets corresponding to connected graphs $\Gamma|_S$ and $\Gamma/S$.
\end{rem}
\noindent
Let $M_1$ and $M_2$ be subsets of $\mathbb{R}^n$. The \emph{Minkowski sum} of $M_1$ and $M_2$ is the following subset of $\mathbb{R}^n$:
\begin{equation*}
M_1+M_2=\{x\in\mathbb{R}^n\colon x=x_1+x_2, x_1\in M_1, x_2\in M_2\}.
\end{equation*}
If $M_1$ and $M_2$ are convex polytopes, then so is $M_1+M_2$.

\begin{defn}
Let $e_i$ be the endpoints of the basis vectors of $\mathbb{R}^{n+1}$. Define the \emph{nestohedron} $P_B$ corresponding to the building set $B$ as following
\begin{equation*}
P_B=\sum_{S\in B}\Delta^S\text{, where }\Delta^S=\conv\{e_i, i\in S\}.
\end{equation*}
If $B(\Gamma)$ is a graphical building set, then $P_{\Gamma}=P_{B(\Gamma)}$ is called a \emph{graph-associahedron}.
\end{defn}

\begin{ex*} Here we especially interested by the following series of graph-associahedra:
\begin{itemize}
\item Let $L_{n+1}$ be the path graph on $[n+1]$, then the polytope $P_{L_{n+1}}$ is called \emph{associahedron} (Stasheff polytope) and denoted by $As^n$;
\item Let $C_{n+1}$ be the cyclic graph on $[n+1]$, then the polytope $P_{C_{n+1}}$ is called \emph{cyclohedron} (Bott-Taubes polytope) and denoted by $Cy^n$;
\item Let $K_{n+1}$ be the complete graph on $[n+1]$, then the polytope $P_{K_{n+1}}$ is called \emph{permutohedron} and denoted by $Pe^n$;
\item Let $K_{1,n}$ be the complete bipartite graph on $[n+1]$, then the polytope $P_{K_{1,n}}$ is called \emph{stellohedron} and denoted by $St^n$.
\end{itemize}
\end{ex*}

The simple $n$-polytope $P\subset \mathbb{R}^n$ is called a \emph{Delzant polytope} if for every its vertex $p$ there exist integer vectors parallel to the edges meeting at $p$ and forming a $\mathbb{Z}$ basis of $\mathbb{Z}^n\subset\mathbb{R}^n$.

\begin{prop}
Let $B$ be a connected building set on $[n+1]$. Then, $\dim P_B=n$ and $P_B$ can be realized as a Delzant polytope. Particularly, every nestohedron is simple.
\end{prop}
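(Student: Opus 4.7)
The plan is to prove the three assertions in order: the dimension, the existence of a Delzant realization, and finally simplicity (which is automatic from Delzant-ness).

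For the dimension, I would observe that each summand $\Delta^S$ lies in the affine hyperplane $\{x\in\mathbb{R}^{n+1}\colon\sum_{i\in S}x_i=1,\;x_j=0\text{ for }j\notin S\}$, so in particular every $\Delta^S$ has coordinate-sum equal to $1$. Hence the Minkowski sum $P_B$ lies in the affine hyperplane $H=\{x\in\mathbb{R}^{n+1}\colon\sum_{i=1}^{n+1}x_i=|B|\}$, giving $\dim P_B\leq n$. For the reverse inequality, connectedness of $B$ guarantees that $[n+1]\in B$, so $\Delta^{[n+1]}$ is one of the summands; it already has full dimension $n$ inside $H$, and Minkowski-summing with further non-empty convex sets never decreases dimension. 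Therefore $\dim P_B=n$.

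For the Delzant realization, I would invoke the combinatorial description of the face structure of $P_B$ due to Feichtner--Sturmfels and Postnikov: the vertices of $P_B$ are in bijection with maximal $B$-nested sets, and under this bijection the edges emanating from a vertex are parallel to vectors of the form $e_i-e_j$ where $(i,j)$ runs over a prescribed set of $n$ pairs read off from the nested set (the elementary ``mutations'' between maximal nested sets). The core check is that at each such vertex these $n$ edge vectors form a $\mathbb{Z}$-basis of the sublattice $\mathbb{Z}^{n+1}\cap\ker\bigl(\sum x_i\bigr)$. I expect this to be the main obstacle. My strategy is to fix one reference maximal nested set whose edge vectors are manifestly a lattice basis (e.g., the nested set arising from a chain $S_1\subsetneq S_2\subsetneq\cdots\subsetneq S_n\subsetneq[n+1]$ of elements of $B$, which yields edge vectors of the form $e_{i_k}-e_{j_k}$ whose Gram-like transition matrix is triangular with unit diagonal), and then show by induction on the distance in the mutation graph that any elementary move between adjacent maximal nested sets transforms one basis into another by a unimodular matrix. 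An alternative route is to exploit the Minkowski-sum presentation directly: the simplices $\Delta^S$ are lattice polytopes with lattice-edge structure, and the normal fan of $P_B$ is the common refinement of the normal fans of the summands, which one then checks to be a smooth (unimodular) fan using the nested-set characterization of cones.

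Finally, simplicity is an immediate consequence: at a vertex of a Delzant polytope there are exactly $n$ edge directions (they form a basis of the ambient lattice), hence exactly $n$ facets meet at that vertex, which is the defining condition for simplicity. This gives the last clause of the proposition.
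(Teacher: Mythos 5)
The paper does not prove this proposition; it records it as a known fact and points to \cite{FS}, \cite{P}, \cite{Ze}. So there is no in-paper argument to compare against, and the question is simply whether your sketch could be completed into a proof.

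Your dimension argument is correct and essentially complete. One small imprecision: $\Delta^{[n+1]}$ lies in $\{\sum x_i=1\}$, not in $H=\{\sum x_i=|B|\}$, so it is not ``inside $H$''; but all that is needed is that $\Delta^{[n+1]}$ is itself $n$-dimensional and that Minkowski addition never lowers dimension, and you do say that. The concluding ``Delzant $\Rightarrow$ simple'' step is also fine.

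The Delzant part is where the real content lies, and there it remains a plan rather than a proof. You correctly identify the key lemma — that the $n$ primitive edge directions at each vertex of $P_B$, which are of the form $e_i-e_j$, form a $\mathbb{Z}$-basis of the rank-$n$ lattice $\{v\in\mathbb{Z}^{n+1}:\sum v_i=0\}$ — but neither of your two proposed routes is carried out. Moreover, your ``reference'' route has a concrete flaw: for a general connected building set a full chain $S_1\subsetneq S_2\subsetneq\dots\subsetneq S_n\subsetneq[n+1]$ of elements of $B$ need not exist. Take $B=\{\{1\},\dots,\{n+1\},[n+1]\}$: the only proper elements are singletons, there is no chain of length $>1$, yet this is a perfectly good connected building set (and $P_B$ is a translate of $\Delta^n$, which is Delzant, so the conclusion is true but your reference nested set is absent). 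So the base case of your mutation induction cannot be set up as stated, and the unimodularity of each mutation is asserted, not proved. Your ``alternative route'' via the normal fan being the nested-set fan is in fact the Feichtner--Sturmfels argument that the paper is implicitly relying on — that is the path to take — but in your write-up the decisive smoothness check is again only gestured at. In short: parts 1 and 3 are fine; part 2 identifies the right lemma and the right reference but leaves the actual unimodularity verification, which is the whole point of the statement, undone, and the first of your two strategies does not get off the ground in general.
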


The convex polytope $P$ is called \emph{flag} if any collection of its pairwise intersecting facets has a nonempty intersection.

\begin{prop}
Every graph-associahedron is flag.
\end{prop}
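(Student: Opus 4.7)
The plan is to invoke the nested-set description of the face lattice of a nestohedron (as developed in \cite{FS},\cite{P},\cite{Ze}). For a connected building set $B$ on $[n+1]$, the faces of $P_B$ are in bijection with \emph{nested sets}: collections $N\subseteq B\setminus\{[n+1]\}$ satisfying
(N1) any two elements of $N$ are comparable under inclusion or disjoint, and
(N2) for any antichain $\{S_1,\ldots,S_m\}\subseteq N$ with $m\geq 2$, the union $S_1\cup\cdots\cup S_m$ does not belong to $B$.
Under this bijection the face indexed by $N$ is the intersection $\bigcap_{S\in N} F_S$ of the facets $F_S$ labeled by the elements of $N$; in particular a family of facets has nonempty intersection iff the corresponding family of subsets is nested, and two facets $F_{S_1},F_{S_2}$ meet iff the pair $\{S_1,S_2\}$ is nested.

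With this dictionary in place, flagness of $P_{B(\Gamma)}$ reduces to a purely combinatorial claim: \emph{for $B=B(\Gamma)$, any pairwise nested family $\{S_1,\ldots,S_k\}\subseteq B(\Gamma)$ is already nested}. Condition (N1) is itself a pairwise condition and transfers for free. For (N2), fix any antichain $\{S_{i_1},\ldots,S_{i_m}\}$, $m\geq 2$, inside the family. By (N1) its elements are pairwise disjoint, and the pairwise version of (N2) gives $S_{i_a}\cup S_{i_b}\notin B(\Gamma)$ for every $a\neq b$, which for a graphical building set is equivalent to saying that $\Gamma$ has no edge between $S_{i_a}$ and $S_{i_b}$. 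Consequently $\Gamma|_{S_{i_1}\cup\cdots\cup S_{i_m}}$ splits as the disjoint union of the induced subgraphs $\Gamma|_{S_{i_a}}$, so it is disconnected, and the full union is not in $B(\Gamma)$, as required.

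The heart of the argument, and the reason the statement is peculiar to graphical building sets rather than to arbitrary nestohedra, is this last step: for graphs the connectivity of an induced subgraph is detected pairwise, so ``no pairwise edge between the parts'' already forces global disconnection. In contrast, for a general building set one could have $S_1\cup S_2\notin B$ for every pair and yet $S_1\cup\cdots\cup S_m\in B$. Nothing in the argument is technically delicate; the only ``obstacle,'' if one wishes to call it that, is becoming fluent with the nested-set description of the face lattice and, in particular, the characterization of intersecting facets recalled above.
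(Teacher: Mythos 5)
Your proof is correct, and it is the standard argument for this fact. Note that the paper itself states this proposition without proof, as part of a list of background facts cited from \cite{FS}, \cite{P}, \cite{Ze}, so there is no in-paper proof to compare against. Your argument proceeds exactly via Proposition~\ref{criteria} (the nested-set criterion for intersections of facets), which the paper does state: flagness reduces to showing that a pairwise nested family is globally nested, condition~(1) is automatically pairwise, and for condition~(2) you correctly observe that for graphical building sets disjoint $S,S'\in B(\Gamma)$ have $S\cup S'\notin B(\Gamma)$ precisely when $\Gamma$ has no edge between $S$ and $S'$, so pairwise non-adjacency of the parts forces disconnection of the full induced subgraph. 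Your closing remark correctly isolates why this works for graphical building sets and fails for general ones (e.g.\ $B=\{\{1\},\{2\},\{3\},\{1,2,3\}\}$ is a non-flag building set where every pairwise union is absent but the triple union is present). One small wording caution: condition~(2) in Proposition~\ref{criteria} quantifies over pairwise \emph{disjoint} subfamilies rather than antichains, though within a family satisfying condition~(1) these notions coincide, so your argument is unaffected.
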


To understand the combinatorics of $P_B$ we need the following statement.

\begin{prop}\label{criteria}
Let $B$ be a connected building set on $[n+1]$. Then, elements $S$ of $B\setminus[n+1]$ are in bijection with facets (denoted by $F_S$) of $P_B$, which are combinatorially equivalent to $P_{B|_S}\times P_{B/S}$.\\
Facets $F_{S_1},\ldots ,F_{S_k}$ have a nonempty intersection if and only if the following conditions hold:
\begin{enumerate}
\item[1)] For any $S_i, S_j$ we have $S_i\subset S_j$, or $S_i\supset S_j$, or $S_i\cap S_j=\emptyset$;
\item[2)] For any $S_{i_1},\dots,S_{i_p}$ such that $S_{i_j}\cap S_{i_l}=\emptyset$ we have $S_{i_1}\sqcup\ldots\sqcup S_{i_p}\notin B$.
\end{enumerate}
\end{prop}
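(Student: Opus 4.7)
The plan is to work with the Minkowski sum description $P_B=\sum_{S\in B}\Delta^S$ and exploit the fact that for any linear functional $\varphi$, the face of a Minkowski sum is the Minkowski sum of the corresponding faces of each summand. First I would identify the facets: for $S\in B\setminus\{[n+1]\}$ consider $\varphi_S(x)=\sum_{i\in S}x_i$. On each summand $\Delta^{S'}$ this functional attains its maximum on $\Delta^{S\cap S'}$ when $S\cap S'\neq\emptyset$ and on all of $\Delta^{S'}$ when $S\cap S'=\emptyset$. A dimension count, using the building-set axiom that $S_1\cap S_2\neq\emptyset$ implies $S_1\cup S_2\in B$, shows that the resulting face $F_S$ has codimension exactly one, that distinct $S$ give distinct $F_S$, and that every facet arises this way (any outer normal of $P_B$ must be, up to scaling and translation, the indicator vector of some $S$ for which the Minkowski-sum contributions combine into an $(n-1)$-dimensional face, and this forces $S\in B$).

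For the combinatorial type of $F_S$, I would partition $B$ into the elements contained in $S$, the elements disjoint from $S$, and the ``straddling'' elements $S'$ with $\emptyset\neq S'\cap S\subsetneq S'$. The face formula then writes $F_S$ as a Minkowski sum lying in two complementary coordinate subspaces (modulo a translate by a fixed vector on $S$): the summands contained in $S$ reassemble, by definition, into $P_{B|_S}$, while the summands disjoint from $S$ together with the $\Delta^{S'\setminus S}$ coming from the straddling terms reassemble into $P_{B/S}$. The key identification here is that the collection of sets $\{S'\setminus S : S'\in B,\ S'\cap S\neq\emptyset\}\cup\{S'\in B:S'\cap S=\emptyset\}$ agrees with $B/S$ as defined in the paper, which is immediate from the definition.

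For the intersection criterion, the plan is to iterate the face formula. A collection $F_{S_1},\ldots,F_{S_k}$ has nonempty common intersection iff some linear functional realizes a face contained in each $F_{S_i}$; taking $\varphi=\sum c_i\varphi_{S_i}$ with generic positive $c_i$ reduces the question to the joint maximization on each $\Delta^{S'}$ separately. Condition (1) — pairwise nested-or-disjoint — is exactly what is needed for that iterated maximization on each $\Delta^{S'}$ to produce a well-defined nonempty face rather than forcing a contradictory choice of maximizing vertices when two $S_i$ cut $\Delta^{S'}$ incomparably. Condition (2) then rules out the one remaining degenerate case: if a disjoint subcollection $S_{i_1},\ldots,S_{i_p}$ had union $S'\in B$, the summand $\Delta^{S'}$ would have its entire face set merged rather than split along the partition, and the intersection $F_{S_{i_1}}\cap\cdots\cap F_{S_{i_p}}$ would coincide with a face of strictly lower codimension than expected — in effect, the facets would fail to be in general position and the intersection would drop to a face already captured by a single $F_{S_{i_1}\sqcup\cdots\sqcup S_{i_p}}$, contradicting the genericity underlying the count.

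The main obstacle I anticipate is the bookkeeping in the product-structure step: carefully tracking how straddling elements split their contribution between the two factors, and verifying that the translate that naturally appears does not affect the combinatorial type. A cleaner but more abstract route is to pass to the normal fan of $P_B$ and identify it with the nested-set fan of $B$, from which both parts of the proposition follow formally by reading off rays and maximal cones; I would fall back on this if the direct Minkowski-sum bookkeeping becomes unmanageable.
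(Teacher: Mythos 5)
The paper itself does not prove Proposition~\ref{criteria}: it is stated as a known fact with citations to Feichtner--Sturmfels, Postnikov, and Zelevinsky. So there is no in-paper argument to compare against; I will assess your proposal on its own.

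Your overall strategy — read facets and faces off the Minkowski-sum description $P_B=\sum_{S\in B}\Delta^S$ via the ``face of a sum is the sum of faces'' principle — is exactly the route taken in Postnikov's work and is the right one. However, there is a genuine sign error that makes the first two steps internally inconsistent. You define the facet $F_S$ by \emph{maximizing} $\varphi_S(x)=\sum_{i\in S}x_i$, and then state that on a summand $\Delta^{S'}$ the maximizing face is $\Delta^{S\cap S'}$. That is correct for maximization, but the facet of $P_B$ labelled by $S$ is the one cut out by the defining inequality $\sum_{i\in S}x_i\geq z_S$, i.e.\ it is the face where $\varphi_S$ is \emph{minimized}. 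Under maximization your ``$F_S$'' is generally not a facet at all: for the pentagon $As^2$ (path $1$--$2$--$3$, so $B=\{\{1\},\{2\},\{3\},\{1,2\},\{2,3\},[3]\}$) the face maximizing $x_2$ is the single vertex $(1,4,1)$, whereas the face minimizing $x_2$ is an edge — a facet. Moreover the next paragraph tacitly switches to minimization, since you say the straddling summands contribute $\Delta^{S'\setminus S}$; under maximization they would contribute $\Delta^{S'\cap S}$, which would land in the $B|_S$ factor, not $B/S$, and the claimed product structure would not appear. So the facet-identification step and the product-structure step are using opposite orientations of the same functional. The fix is simple — minimize $\varphi_S$ (equivalently, maximize $-\varphi_S$) throughout. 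Once you do, the face becomes $\sum_{S'\subseteq S}\Delta^{S'}+\sum_{S'\cap S=\emptyset}\Delta^{S'}+\sum_{\text{straddling }S'}\Delta^{S'\setminus S}$; the first block is $P_{B|_S}$ in the $S$-coordinates, and the remaining two blocks are a Minkowski sum of simplices $\Delta^{S''}$, $S''\in B/S$, possibly with integer multiplicities $\geq 1$. You should also say explicitly why the multiplicities are harmless: replacing $\Delta^{S''}$ by $m\,\Delta^{S''}$ with $m>0$ does not change the normal fan, hence not the combinatorial type, so the complementary factor is combinatorially $P_{B/S}$.

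The intersection criterion is where the sketch is thinnest. Taking $\varphi=\sum c_i\varphi_{S_i}$ with generic positive weights is a good idea, but the two phrases doing the real work — ``forcing a contradictory choice of maximizing vertices'' for the necessity of condition (1), and ``the facets would fail to be in general position'' for the necessity of condition (2) — are currently assertions, not arguments. For sufficiency one actually has to exhibit, for a collection $\{S_i\}$ satisfying (1) and (2), a vertex of $P_B$ (equivalently a $B$-nested set or $B$-forest of size $n$) containing all the $S_i$; for necessity, one must show that if two $S_i$ overlap without nesting, or if a disjoint subfamily has union in $B$, then no vertex selection on the summands can simultaneously minimize all the $\varphi_{S_i}$. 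This is precisely the content of the nested-set description of the face lattice. Your fallback of passing to the normal fan and identifying it with the nested-set fan is in fact the cleaner way to finish this part, and I would recommend committing to it rather than trying to make the ad hoc genericity argument rigorous.
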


\begin{not*}
Eventually, we will write "facet $S$" meaning "facet $F_S$".
\end{not*}

\section{Technique}

\subsection{The $\gamma$-vectors of flag nestohedra} We need the next results from \cite{V1,V2}.

\begin{thm}\label{Gal}
The $\gamma$-vector of any flag nestohedron has nonnegative entries.
\end{thm}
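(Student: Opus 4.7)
The plan is to follow the strategy of \cite{V1,V2}: realize every flag nestohedron as a polytope obtained from the cube $I^n$ by a sequence of shavings along codimension-$2$ faces, and then show that such shavings preserve nonnegativity of the $\gamma$-vector. Since $\gamma(I^n)=(1,0,\ldots,0)$ is manifestly nonnegative, this will give the theorem.

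First I would set up the inductive realization. Using Proposition~\ref{criteria}, the combinatorics of $P_B$ is governed by the building set $B$, and for a flag building set one can order the elements of $B\setminus\{[n+1]\}$ in such a way that they are produced one at a time by adjoining elements to a sub-building-set. Starting from a connected flag building set on $[n+1]$ whose only proper nontrivial elements are the singletons (the associated nestohedron is then the cube $I^n$, via $P_B$ for $B = \{\{i\}\} \cup \{[n+1]\}$ suitably, or more directly from a compatible base case), one adds the remaining elements of $B$ in a linear order respecting inclusion. Each step corresponds geometrically to shaving a face of the current polytope; the flagness of $B$ ensures that only codimension-$2$ faces need to be shaved, because in the flag case every new facet intersects the existing ones pairwise in codimension-$2$ faces rather than higher-codimension ones.

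Next I would analyze the effect on the $\gamma$-polynomial of shaving a codimension-$2$ face. If $P'$ is obtained from a simple $n$-polytope $P$ by shaving a codimension-$2$ face $G \cong P_1\times P_2$ with $\dim P_1 + \dim P_2 = n-2$, then a direct computation (carried out in \cite{V1,V2}) gives a formula of the shape
\begin{equation*}
\gamma(P')(\tau) = \gamma(P)(\tau) + \tau\cdot\gamma(P_1)(\tau)\cdot\gamma(P_2)(\tau),
\end{equation*}
so the correction term is a product of lower-dimensional $\gamma$-polynomials multiplied by $\tau$. I would verify this formula by expressing $H(P')$ in terms of $H(P)$ and $H(G)$ via the standard shaving identity, and then rewriting everything in the $(a,b)=(\alpha+t,\alpha t)$ variables used to read off $\gamma$.

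Finally I would close the induction. The faces $G = P_1\times P_2$ that appear as codimension-$2$ faces of flag nestohedra are themselves products of flag nestohedra of smaller dimension (by Proposition~\ref{criteria}, faces of $P_B$ are products of nestohedra $P_{B|_S}\times P_{B/S}$, and flagness is inherited). By the inductive hypothesis $\gamma(P_1),\gamma(P_2)\ge 0$ componentwise, so the correction $\tau\cdot\gamma(P_1)(\tau)\cdot\gamma(P_2)(\tau)$ has nonnegative coefficients, and nonnegativity of $\gamma(P)$ is preserved at each shaving step. I expect the main obstacle to be the first part: producing a canonical linear order on $B$ for which each successive shaving is genuinely along a codimension-$2$ face and whose flagness input is used correctly. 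Once this combinatorial realization is in place, the $\gamma$-polynomial recursion and the induction are routine.
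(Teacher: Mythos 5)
Your strategy --- realize every flag nestohedron from the cube by a sequence of codimension-$2$ shavings and propagate $\gamma$-nonnegativity through the recursion $\gamma(Q)=\gamma(P)+\tau\gamma(G)$ --- is exactly the method of \cite{V1,V2}, which this paper \emph{cites} for Theorem~\ref{Gal} rather than reproving; your shaving formula is precisely Proposition~\ref{shave}(1) specialized to a codimension-$2$ face $G\cong P_1\times P_2$, together with the multiplicativity $\gamma(P_1\times P_2)=\gamma(P_1)\gamma(P_2)$. So in outline your proposal is the intended proof.

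There are, however, two concrete gaps. First, the base case is wrong: the connected building set on $[n+1]$ whose only proper elements are the singletons, $B_0=\{\{1\},\dots,\{n+1\},[n+1]\}$, gives the \emph{simplex} $\Delta^n$, not $I^n$, and $\Delta^n$ is not even flag for $n\geq 2$ (all $n+1$ facets pairwise meet but have empty total intersection). The cube is instead realized by a chain building set such as $\{\{1\},\dots,\{n+1\},\{n,n+1\},\{n-1,n,n+1\},\dots,[n+1]\}$, i.e.\ the iterated substitution $J(J(\cdots J(1,2),3),\dots,n+1)$, and one must additionally show that a given connected flag building set $B$ contains some such chain $B_0$ so that Theorem~\ref{fm} applies with $B_0\subset B$; this is a lemma, not a triviality. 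Second, and more seriously, the claim that ``flagness ensures only codimension-$2$ faces need to be shaved'' is precisely the nontrivial content of \cite{V1,V2}: Theorem~\ref{fm} in the inclusion order produces shaved faces $G^i=\bigcap_{j=1}^{k_i}F_{S^i_j}$ where $k_i$ can exceed $2$. The real argument reorders and interleaves the shavings, using the flag hypothesis, so that every cut is along a codimension-$2$ face whose two factors are again flag nestohedra of smaller dimension; without that reduction the recursion $\gamma(Q)=\gamma(P)+\tau\gamma(P_1)\gamma(P_2)$ simply does not apply at an arbitrary step. You correctly identify this as ``the main obstacle,'' but as written it is an assertion rather than an argument, and it is exactly where the proof lives.
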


\begin{thm}\label{leq}
If $B_1$ and $B_2$ are connected building sets on $[n+1]$, $B_1\subseteq B_2$, and $P_{B_i}$ are flag, then
\begin{equation*}
\gamma_i(P_{B_1})\leq\gamma_i(P_{B_2}).
\end{equation*}
\end{thm}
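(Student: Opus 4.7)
The strategy is to induct on $N:=|B_2\setminus B_1|$ and reduce to a single added element, then translate the step into a shaving and analyze the $\gamma$-polynomial. The base case $N=0$ is trivial.

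\medskip
\noindent\textbf{Inductive reduction.} For $N\geq 1$, pick $S\in B_2\setminus B_1$ of \emph{maximum} cardinality and set $B':=B_1\cup\{S\}$. I claim $B'$ is still a connected building set. Connectedness is inherited from $B_1$. For the union axiom, if $S''\in B_1$ meets $S$ then $S\cup S''\in B_2$ (as $B_2$ is a building set) and $|S\cup S''|\geq|S|$. By the maximality of $|S|$ in $B_2\setminus B_1$, either $|S\cup S''|=|S|$ (so $S\cup S''=S\in B'$) or $S\cup S''\in B_1\subseteq B'$. Thus $B_1\subseteq B'\subseteq B_2$, so it suffices to prove the inequality for the pair $(B_1,B')$ and iterate, and from here on I assume $B_2=B_1\cup\{S\}$.

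\medskip
\noindent\textbf{Interpretation as a shaving.} By \cite{FM} (invoked in the Introduction in exactly this form) the passage from $P_{B_1}$ to $P_{B'}$ is realized by shaving a single face $G_S\subseteq P_{B_1}$. Using Proposition~\ref{criteria} and Remark~\ref{rem}, the shaved face $G_S$ is combinatorially a product of graph-associahedra built from the restriction/contraction of $B_1$ along the facets of $P_{B_1}$ minimally containing the nested data of $S$; in particular $G_S$ is itself a flag simple polytope, and hence by Theorem~\ref{Gal} its $\gamma$-polynomial has nonnegative coefficients (multiplicativity of $\gamma$ across a product then ensures nonnegativity for the whole face $G_S$).

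\medskip
\noindent\textbf{Effect on $\gamma$.} The remaining task, which is the crux, is to show that for a simple flag polytope $P$ and a face $G$ whose shaving $\tilde P$ is again flag, one has an identity of the form
\begin{equation*}
\gamma(\tilde P)(\tau)=\gamma(P)(\tau)+\tau\cdot q(\tau)\cdot\gamma(G)(\tau)
\end{equation*}
for a polynomial $q(\tau)$ with nonnegative coefficients depending only on $\dim P$ and $\mathrm{codim}\,G$. This follows by rewriting the classical relation between the $H$-polynomials of $P$, $\tilde P$ and the shaved face $G$ (expressing $H(\tilde P)-H(P)$ in terms of $H(G)$ times a symmetric-in-$(\alpha,t)$ correction) into the $(\alpha+t,\alpha t)$-basis; this is the shaving calculus developed in \cite{V1,V2}. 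Combined with the previous step ($\gamma(G_S)\geq 0$), the identity yields $\gamma_i(P_{B'})-\gamma_i(P_{B_1})\geq 0$, closing the induction.

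\medskip
\noindent\textbf{Main obstacle.} The serious work is the last paragraph: verifying that the $\gamma$-change under a single shaving is a nonnegative-coefficient polynomial multiple of $\gamma$ of the shaved face, and checking that shaving $G_S$ in a flag nestohedron preserves flagness (so that the formula applies). Once that identity is in hand the reduction to the single-element step and the invocation of Theorem~\ref{Gal} are routine.
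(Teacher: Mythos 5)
The paper does not reprove Theorem~\ref{leq}; it cites it from \cite{V1,V2} and explains the mechanism in the discussion after Theorem~\ref{fm}: for flag $P_{B_0}\subset P_{B_1}$ one can \emph{reorder the shavings so that every shaving is of a face of codimension exactly $2$}, and then apply Proposition~\ref{shave}, $\gamma(Q)=\gamma(P)+\tau\gamma(G)$, together with Theorem~\ref{Gal} to get monotonicity. Your proposal misses this key point and in fact hinges on a false claim.

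The claimed identity
$\gamma(\tilde P)(\tau)=\gamma(P)(\tau)+\tau\, q(\tau)\,\gamma(G)(\tau)$
with $q$ nonnegative and depending only on $\dim P$ and $\operatorname{codim} G$ does not hold for codimension $\geq 4$. The correct $H$-polynomial relation for shaving a face $G$ of codimension $r$ is
\begin{equation*}
H(\tilde P)=H(P)+\alpha t\,H(G)\,\bigl(\alpha^{r-2}+\alpha^{r-3}t+\dots+t^{r-2}\bigr).
\end{equation*}
For $r=2$ the extra factor is $1$ and you recover Proposition~\ref{shave}. Already for $r=4$ the factor is $\alpha^2+\alpha t+t^2=(\alpha+t)^2-\alpha t$, which has a negative coefficient in the $(\alpha+t,\alpha t)$-basis; tracking this through gives $\gamma_i(\tilde P)-\gamma_i(P)=\gamma_{i-1}(G)-\gamma_{i-2}(G)$, which is negative whenever $\gamma_{i-1}(G)=0$ but $\gamma_{i-2}(G)>0$. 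So a single shaving of a high-codimension face can \emph{decrease} entries of $\gamma$, and your ``effect on $\gamma$'' step fails. This is precisely why the argument in \cite{V1,V2} must first reorganize the shaving sequence to make every step codimension~$2$ --- an observation the paper states explicitly.

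There is a second gap in the inductive reduction. Even granting that $B'=B_1\cup\{S\}$ (with $S$ of maximum cardinality) is a connected building set, there is no reason that $P_{B'}$ is flag. The theorem's hypothesis only says the endpoints $P_{B_1}$ and $P_{B_2}$ are flag; if an intermediate $P_{B'}$ is not flag, you cannot invoke Theorem~\ref{Gal} for it, nor can you iterate your argument from $B'$ to $B_2$. Relatedly, when $B_2=B_1\cup\{S\}$ the codimension of the shaved face equals the number $k$ of parts in the decomposition of $S$ by elements of $B_1$; flagness of $P_{B_2}$ forces $k=2$ (otherwise $F_{S_1},\dots,F_{S_k}$ would be pairwise intersecting with empty total intersection), but nothing guarantees $k=2$ at intermediate stages of your induction. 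The machinery of \cite{V1,V2} chooses the order of added elements precisely so that every decomposition encountered has two parts, keeping every shaving in the codimension-$2$ regime where Proposition~\ref{shave} and Theorem~\ref{Gal} do the work.
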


\subsection{Shavings}
Here we describe used machinery from \cite{V1,V2}, which is based on \cite{FM}.

\begin{cons*}[\emph{Decomposition} of $S\in B_1$ by elements of $B_0$]
Let $B_0$ and $B_1$ be connected building sets on $[n+1]$, $B_0\subset B_1$, and $S\in B_1$. Let us call the decomposition of $S$ by elements of $B_0$ the representation $S=S_1\sqcup\ldots \sqcup S_k, S_j\in B_0$ such that $k$ is minimal among such disjoint representations.

It is easy to check that the decomposition exists and is unique.
\end{cons*}

The next statement can be extracted from \cite[Theorem 4.2]{FM} and also the direct proof in accepted terms is given in \cite[Lemma 5]{V1}.
\begin{thm}\label{fm}
Let $B_0$ and $B_1$ be connected building sets on $[n+1]$ and $B_0\subset B_1$. The set $B_1$ is partially ordered by inclusion. Let us number all the elements of $B_1\setminus B_0$ by indexes $i$ in such a way that $i\leq i'$ provided $S^i\supseteq S^{i'}$.

Then, $P_{B_1}$ is obtained from $P_{B_0}$ by sequential shaving faces $G^i=\bigcap_{j=1}^{k_i} F_{S_j^i}$ that correspond to decompositions $S^i=S_1^i\sqcup\ldots\sqcup S_{k_i}^i\in B_1\setminus B_0$ starting from $i=1$ (i.e. in reverse inclusion order).
\end{thm}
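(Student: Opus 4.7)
The plan is to proceed by induction on $m = |B_1 \setminus B_0|$, reducing the theorem to the one-step case $B_1 = B_0 \cup \{S\}$ for a single element $S$. For the inductive step, the key observation is that because $S^1$ is a maximal element of $B_1 \setminus B_0$ (by the chosen ordering), no $S^i$ with $i \geq 2$ contains $S^1$; hence the $B_0$-decomposition of each $S^i$ ($i \geq 2$) coincides with its $(B_0 \cup \{S^1\})$-decomposition. So once the one-step case is established, applying it to shave $G^1$ and then invoking the induction hypothesis on the pair $(B_0 \cup \{S^1\}, B_1)$ with inherited indexing completes the argument.

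The one-step case is the heart of the matter. Suppose $B_1 = B_0 \cup \{S\}$ with $S \notin B_0$, and let $S = S_1 \sqcup \cdots \sqcup S_k$ be the $B_0$-decomposition of $S$. First I would check that $G = F_{S_1} \cap \cdots \cap F_{S_k}$ is a genuine codimension-$k$ face of $P_{B_0}$ via Proposition~\ref{criteria}: the pairwise disjointness of the $S_j$ is built into the definition of decomposition, and minimality of $k$ forces every sub-union $S_{i_1} \sqcup \cdots \sqcup S_{i_p}$ with $2 \leq p \leq k$ out of $B_0$, since otherwise merging those $S_{i_j}$ into a single element would shorten the decomposition.

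I would then show that shaving $G$ produces a polytope combinatorially equivalent to $P_{B_0 \cup \{S\}}$ by matching face posets. The facets of $P_{B_0 \cup \{S\}}$ are those of $P_{B_0}$ together with one new facet $F_S$; the shaved polytope has the facets of $P_{B_0}$ plus one new facet $F_{\mathrm{new}}$. Identifying $F_{\mathrm{new}}$ with $F_S$, the substance of the proof is to verify that for any $\{T_1, \ldots, T_\ell\} \subset B_0$, the family $\{F_{T_1}, \ldots, F_{T_\ell}, F_S\}$ has nonempty intersection in $P_{B_0 \cup \{S\}}$ iff $\{F_{T_1}, \ldots, F_{T_\ell}, F_{\mathrm{new}}\}$ does in the shaved polytope, equivalently iff $\{F_{T_1}, \ldots, F_{T_\ell}, F_{S_1}, \ldots, F_{S_k}\}$ has nonempty intersection in $P_{B_0}$. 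The main obstacle is this translation of conditions: one must analyze how each $T_i$ sits relative to $S$ and to the $S_j$'s, using the structural fact (itself a consequence of minimality) that any $T \in B_0$ contained in $S$ must lie inside a single $S_j$. With this fact, the nesting and disjoint-union conditions of Proposition~\ref{criteria} on the two sides match by a careful but essentially routine case analysis, which together with the induction completes the proof.
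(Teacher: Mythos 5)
The paper does not give its own proof of Theorem~\ref{fm}; it cites \cite[Theorem 4.2]{FM} and says a direct proof in these terms appears in \cite[Lemma 5]{V1}. Your strategy—induct on $|B_1\setminus B_0|$, reduce to the one-element case via maximality of $S^1$, and match face posets via Proposition~\ref{criteria}—is exactly the direct route that \cite{V1} takes, and your observation that $B_0\cup\{S^1\}$ is again a connected building set (via maximality of $S^1$) and that the $B_0$-decompositions of the remaining $S^i$ are unchanged is correct and does the real work of the reduction.

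However, there is a genuine imprecision in the one-step case that would sink the argument if taken literally. You assert that $\{F_{T_1},\ldots,F_{T_\ell},F_{\mathrm{new}}\}$ meet in the shaved polytope \emph{iff} $\{F_{T_1},\ldots,F_{T_\ell},F_{S_1},\ldots,F_{S_k}\}$ meet in $P_{B_0}$. This is false as stated: taking $\{T_1,\ldots,T_\ell\}=\{S_1,\ldots,S_k\}$, the right-hand side holds (the intersection is $G\neq\emptyset$), while the left-hand side fails, since shaving $G$ makes $F_{S_1}\cap\cdots\cap F_{S_k}=\emptyset$ and in particular $F_{\mathrm{new}}$ cannot meet a face that has been cut away. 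The correct statement requires the additional condition $\{S_1,\ldots,S_k\}\not\subseteq\{T_1,\ldots,T_\ell\}$, which is precisely what encodes that the intersection $\bigcap F_{T_i}$ is not swallowed by $G$. On the other side this matches the fact that in $P_{B_0\cup\{S\}}$ condition~2) of Proposition~\ref{criteria} is violated when $\{S_1,\ldots,S_k\}\subseteq\{T_1,\ldots,T_\ell\}$, because then $S_1\sqcup\cdots\sqcup S_k=S\in B_0\cup\{S\}$.

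A second omission: you only discuss families of facets that include $F_S$ (resp.\ $F_{\mathrm{new}}$), but the poset isomorphism also requires checking families $\{F_{T_1},\ldots,F_{T_\ell}\}$ with all $T_i\in B_0$. This is not automatic, because passing from $B_0$ to $B_0\cup\{S\}$ strengthens condition~2) of Proposition~\ref{criteria} (a disjoint sub-collection may now have union equal to $S$), and on the geometric side shaving $G$ destroys all faces of $P_{B_0}$ contained in $G$. One must verify these two effects coincide, which again comes down to the observation that a disjoint family in $B_0$ meeting the intersection criterion and having union $S$ must be exactly $\{S_1,\ldots,S_k\}$. Both gaps are repairable along the lines you indicate, but as written the ``equivalently iff'' is not an equivalence and the unchecked half of the poset comparison is missing.
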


According to \cite{V1,V2}, if $P_{B_0}$ and $P_{B_1}$ are flag, then we can change the order of shavings (compare to the last theorem) in such a way that only faces of codimension 2 will be shaved off. This type of shavings increases the $\gamma$-vector in case of flag nestohedra.

\begin{prop}\label{shave}\emph{(cf. \cite[Proposition 6]{V1})}
Let the polytope $Q$ be obtained from the simple polytope $P$ by shaving the face $G$ of codimension 2, then
\begin{enumerate}
\item[1)]$\gamma(Q)=\gamma(P)+\tau\gamma(G)$;
\item[2)]$H(Q)=H(P)+\alpha t H(G)$.
\end{enumerate}
\end{prop}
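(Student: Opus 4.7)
The plan is to prove part~2 by a direct face-count analysis, then deduce part~1 by comparing $\gamma$-expansions. Since $P$ is simple and $G$ has codimension~2, one can write $G=F_1\cap F_2$ uniquely. Realize the shaving as cutting $P$ by a hyperplane $h$ placed close to $G$ with $G\subset h^-$; the new facet $F_G=P\cap h$ is combinatorially $G\times I$, and its two ``ends'' $G\times\{0\}=F_G\cap F_1'$ and $G\times\{1\}=F_G\cap F_2'$ lie on the modifications $F_1',F_2'$ of $F_1,F_2$.

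The combinatorial heart of the argument is the following description of the faces of $Q$. Every face $F$ of $P$ with $F\not\subseteq G$ gives exactly one face of $Q$ of the same dimension (unchanged if $F\cap G=\emptyset$, restricted to $h^+$ otherwise). Every face $H\subseteq G$ (i.e.\ every face of $G$ viewed as a polytope, including $G$ itself) is shaved off in $P$ but contributes three faces to $Q$: the two faces $H\times\{0\}$, $H\times\{1\}$ of dimension $\dim H$, and the face $H\times I$ of dimension $\dim H+1$. The combinatorial isomorphism $F_i'\cong F_i$ (under which $F_G\cap F_i'$ corresponds to the old facet $G$ of $F_i$) ensures the tripling is consistent with the face counts of $F_1',F_2'$, so no face is double-counted. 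Hence
\begin{equation*}
f_k(Q)-f_k(P)=f_k(G)+f_{k-1}(G),\qquad k\geq 0,
\end{equation*}
with the convention $f_{-1}(G):=0$.

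Multiplying the last identity by $\alpha^k t^{n-k}$ and summing gives $F(Q)-F(P)=t^2 F(G)+\alpha t\,F(G)=t(\alpha+t)F(G)$. Substituting $\alpha\mapsto\alpha-t$ converts $F$-polynomials into $H$-polynomials and yields $H(Q)-H(P)=\alpha t\,H(G)$, which is part~2. For part~1, use $\dim G=n-2$ to compute $\alpha t\,H(G)=\sum_i\gamma_i(G)(\alpha t)^{i+1}(\alpha+t)^{n-2(i+1)}$, and match coefficients against the $\gamma$-expansion of $H(Q)$; uniqueness of this expansion in the basis $\{(\alpha t)^i(\alpha+t)^{n-2i}\}$ for degree-$n$ symmetric polynomials in $\alpha,t$ gives $\gamma_i(Q)=\gamma_i(P)+\gamma_{i-1}(G)$, equivalently $\gamma(Q)(\tau)=\gamma(P)(\tau)+\tau\gamma(G)(\tau)$.

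The main obstacle is the face-counting step: one must carefully enumerate the faces of $Q$ on and near the new facet $F_G$, and verify that the tripling of each face $H\subseteq G$ does not overlap with the modifications of $F_1,F_2$ and their subfaces. Organizing the count through the $G\times I$-structure of $F_G$, as above, keeps the bookkeeping clean and dispatches the rest of the proof to elementary polynomial manipulation.
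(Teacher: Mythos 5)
Your proof is correct. The paper itself gives no proof of this proposition, only a citation to \cite[Proposition 6]{V1}, and your argument is the natural one that such a citation would point to: the standard shaving picture for a codimension-$2$ face $G=F_1\cap F_2$ of a simple polytope, in which the new facet is combinatorially $G\times I$, the faces of $Q$ not lying in $F_G$ are in dimension-preserving bijection with the faces of $P$ not contained in $G$, and the faces lying in $F_G$ are precisely those of $G\times I$, giving $f_k(Q)-f_k(P)=f_k(G)+f_{k-1}(G)$. Your translation into $F(Q)-F(P)=t(\alpha+t)F(G)$, the substitution $\alpha\mapsto\alpha-t$ turning $t(\alpha+t)$ into $\alpha t$, and the shift of index in the $\gamma$-basis $\{(\alpha t)^i(\alpha+t)^{n-2i}\}$ are all carried out correctly, so both parts follow. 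The one place to be slightly more explicit if you wrote this up formally is the bijection claim: for a face $\tilde F$ of $P$ with $\tilde F\not\subseteq G$, one should note that $\tilde F$ has a vertex outside $G$, hence $\tilde F\cap h^{+}$ has the same dimension as $\tilde F$ and is not contained in $h$, so it is neither lost nor confused with a face of $F_G$; conversely every face of $Q$ not contained in $F_G$ arises this way because it is an intersection of truncated facets $F_i\cap h^{+}=F_i'$ with $F_G$ omitted. That said, your remark about the isomorphisms $F_i'\cong F_i$ is not actually needed for the count and could be dropped.
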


\subsection{Substitution of building sets}

\begin{cons*}[N.~Erokhovets]
Let $B,B_1,\ldots ,B_{n+1}$ be connected building sets on $[n+1],[k_1],\ldots ,[k_{n+1}]$. Define the connected building set $B(B_1,\ldots ,B_{n+1})$ on $[k_1]\sqcup\ldots\sqcup[k_{n+1}]=[k_1+\ldots +k_{n+1}]$ consisting of the elements $S^i\in B_i$ and $\bigsqcup\limits_{i\in S}[k_i]$, where $S\in B$.
\end{cons*}

\begin{prop}[N.~Erokhovets]\label{eq1}
Let $B,B_1,\ldots ,B_{n+1}$ be connected building sets on $[n+1],[k_1],\ldots ,[k_{n+1}]$, and $B'=B(B_1,\ldots ,B_{n+1})$. Then $P_{B'}$ is combinatorially equivalent to $P_B\times P_{B_1}\times\dots\times P_{B_{n+1}}$ and the following mapping $\varphi\colon (B\setminus[n+1])\bigsqcup\limits_{i=1}^{n+1}(B_i\setminus[k_i])\to B'\setminus [k_1+\ldots +k_{n+1}]$ defines the facet correspondence.
\begin{equation*}
\varphi(S)=\begin{cases} S&\text{, if }S\in B_i;\\\bigsqcup\limits_{i\in S}[k_i]&\text{, if }S\in B.\end{cases}
\end{equation*}
\end{prop}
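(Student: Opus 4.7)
The plan is to apply Proposition \ref{criteria} to both $P_{B'}$ and the product $P_B\times P_{B_1}\times\cdots\times P_{B_{n+1}}$, showing that $\varphi$ is a bijection between their facet index sets and that the two intersection criteria agree. Since a simple polytope is determined by its facet intersection data, this yields the asserted combinatorial equivalence.

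First I would verify that $B'$ is a connected building set on $[k_1+\ldots+k_{n+1}]$: singletons are supplied by the $B_i$'s, the top element comes from $[n+1]\in B$, and closure under union of intersecting elements is a routine case analysis distinguishing \emph{internal} elements $S\in B_i$ (subsets of a single $[k_i]$) from \emph{block} elements $\bigsqcup_{i\in S}[k_i]$ with $S\in B$. Then $\varphi$ is a bijection $(B\setminus[n+1])\sqcup\bigsqcup_i(B_i\setminus[k_i])\to B'\setminus\{\mathrm{top}\}$: every element of $B'$ is either a strict subset of some $[k_i]$ (uniquely from $B_i\setminus[k_i]$) or a disjoint union of complete blocks $\bigsqcup_{i\in S}[k_i]$ with $S\in B\setminus[n+1]$; the otherwise ambiguous cases $[k_i]=\varphi(\{i\})$ are consistently attributed to the $B$-side since $\{i\}\in B\setminus[n+1]$ is excluded from $B_i\setminus[k_i]$.

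Second, I would check that for any collection $T_1,\ldots,T_k\in B'\setminus\{\mathrm{top}\}$, partitioned by type, Proposition \ref{criteria}'s conditions hold iff they hold separately within $B$ for the block-type pullbacks and within each $B_i$ for the internal pullbacks, which matches the product description of facet intersections in $P_B\times\prod_i P_{B_i}$. Condition 1 across types is automatic: an internal $S\subset[k_j]$ and a block $\bigsqcup_{i\in S'}[k_i]$ are nested if $j\in S'$ and disjoint otherwise; internals from different $B_i$'s are disjoint; blocks behave exactly according to the relation of the corresponding $S,S'\in B$. Within a single type the condition transfers tautologically. For condition 2, a purely block-type disjoint subfamily of size $\geq 2$ has union $\bigsqcup_{j\in S_1\sqcup\ldots\sqcup S_p}[k_j]\in B'$ iff $S_1\sqcup\ldots\sqcup S_p\in B$, and a purely internal subfamily inside a single $B_i$ has union in $B'$ iff it lies in $B_i$, matching condition 2 for $B$ and $B_i$ respectively.

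The main obstacle is the mixed-type case of condition 2. Suppose a pairwise disjoint subfamily mixing block and internal elements has union in $B'$; by the shape classification of $B'$-elements, this union either sits inside a single $[k_i]$ (impossible with genuine mixing) or is a complete block-union $\bigsqcup_{j\in J'}[k_j]$ with $J'\in B$. In the latter case, every block $[k_j]$ receiving contributions from strict internals must be \emph{entirely} covered by those internals, producing a pairwise disjoint subfamily of strict internals in $B_j$ of size $\geq 2$ whose union is $[k_j]\in B_j$; hence condition 2 already fails for $B_j$. Thus failure of condition 2 for $B'$ is equivalent to failure for at least one of $B, B_1,\ldots,B_{n+1}$, and the conditions factor as required. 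Combining this with the bijection $\varphi$ and the fact that simple polytopes are determined by their facet intersection data gives the combinatorial equivalence, and the dimension count $\dim P_{B'}=k_1+\ldots+k_{n+1}-1=n+\sum_i(k_i-1)$ serves as a consistency check.
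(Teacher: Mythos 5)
Your proof is correct. Note first that the paper itself states Proposition~\ref{eq1} without proof, attributing it to N.~Erokhovets, so there is no internal argument to compare against; your task was effectively to supply the missing proof, and you have done so along the most natural lines. Using Proposition~\ref{criteria} to reduce combinatorial equivalence of simple polytopes to a comparison of facet-intersection data is exactly the right move, and all the delicate points are handled: the bijectivity of $\varphi$, including the one genuinely ambiguous case $[k_i]=\varphi(\{i\})$, which you correctly resolve by excluding $[k_i]$ from the $B_i\setminus[k_i]$ side; the observation that Condition~1 of Proposition~\ref{criteria} is automatic across types and transfers tautologically within a type; and, most importantly, the mixed-type analysis of Condition~2. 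There your key observation — that a block $[k_j]$ in the putative union which is touched by a strict internal cannot be completed by any block-type member of a pairwise-disjoint family (such a block would meet the internal) and hence must be covered by at least two strict internals of $B_j$, forcing a Condition~2 violation in $B_j$ — is precisely what makes the criteria factor. A tiny presentational gap: you should also note the sub-case of a pairwise-disjoint family consisting of strict internals drawn from two or more distinct $B_i$'s with no blocks at all; its union cannot be an element of $B'$ (it is neither contained in a single $[k_i]$ nor a full block union, since each contributing $[k_i]$ has exactly one strict internal and so is not covered), so Condition~2 is vacuous there. This is implicit in your ``shape classification'' remark but worth spelling out. With that minor addition the argument is complete and correct.
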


\begin{ex*}
Let $B,B_1,B_2$ be building sets $\{\{1\},\{2\},\{1,2\}\}$, which correspond to the interval $I$. Let us show what is $B(B_1,B_2)$. We substitute $B_1=\{\{1\},\{2\},\{1,2\}\}$ as $a$ and $B_2=\{\{3\},\{4\},\{3,4\}\}$ as $b$ to the building set $\{\{a\},\{b\},\{a,b\}\}$ and obtain the building set $B'$ on $[4]$ consisting of $\{i\}, \{1,2\}, \{3,4\}, [4]$. Here we reordered $B,B_1,B_2$ to make them not intersecting.

The facet correspondence is:
\begin{align*}
\{1\}\in B_1&\mapsto\{1\}\in B';\qquad\{2\}\in B_1\mapsto\{2\}\in B';\\
\{3\}\in B_2&\mapsto\{3\}\in B';\qquad\{4\}\in B_2\mapsto\{4\}\in B';\\
\{a\}\in B_2&\mapsto\{1,2\}\in B';\quad\{b\}\in B_2\mapsto\{3,4\}\in B'.
\end{align*}
\end{ex*}

\begin{not*}
If, for example, building sets $B_1,\ldots, B_n$ are $\{1\},\ldots,\{n\}$, then we will write $B(1,2,\ldots,n, B_{n+1})$ instead of $B(\{1\},\{2\},\ldots,\{n\}, B_{n+1})$ to simplify the notations.
\end{not*}

\section{Inductive formulas}
Let $J$ be the building set $\{\{1\},\{2\},\{1,2\}\}$, which corresponds to the interval $P_J=I$.

\begin{cons}\label{ind}
Let $\Gamma_n$ be a connected graph on $[n]$ and $V\subseteq [n]$ induces the complete subgraph of $\Gamma_n$. Set $\Gamma_{n+1}=\Gamma_{n}\cup\{n+1\}\cup \{n+1, V\}$, i.e. the vertexes $V$ are adjacent to the new vertex $\{n+1\}$.

According to proposition \ref{eq1}, the building set $B_1=J(B(\Gamma_n), n+1)=B(\Gamma_n)\cup \{n+1\}\cup[n+1]$ corresponds to $P_{B_1}=P_{\Gamma_n}\times I$: the bottom and top bases of $P_{\Gamma_n}\times I$ are $[n],\{n+1\}\in B_1$; the side facets are $S\in B(\Gamma_n)\setminus[n]\subset B_1$. Thus, side facets $S$ are naturally identified with facets of $P_{\Gamma_n}$.

$B(\Gamma_{n+1})\setminus B_1=\{S\sqcup \{n+1\}, S\in \mathcal{S}\}$, where $\mathcal{S}=\{S\in B(\Gamma_n)\setminus[n]\colon S\cap V\neq\emptyset\}$. By Theorem~\ref{fm}, $P_{\Gamma_{n+1}}$ is obtained from $P_{\Gamma_n}\times I$ by shaving intersections of the top base $F_{\{n+1\}}$ with the side facets $F_S$ for $S\in\mathcal{S}$. Since the top base doesn't change after shaving its facets, the shaved off faces are exactly $P_{\Gamma_n|_S}\times P_{\Gamma_n/S}, S\in\mathcal{S}$. By proposition \ref{shave}, we have:
\begin{align}
\gamma(P_{\Gamma_{n+1}})&=\gamma(P_{\Gamma_n})+\tau\sum_{S\in\mathcal{S}}\gamma(P_{\Gamma_n|_S})\gamma( P_{\Gamma_n/S});\label{formulas}\\
H(P_{\Gamma_{n+1}})&=(\alpha+t)H(P_{\Gamma_n})+\alpha t\sum_{S\in\mathcal{S}}H(P_{\Gamma_n|_S})H( P_{\Gamma_n/S}).
\end{align}
\end{cons}

We required that $V$ induces the complete subgraph of $\Gamma_n$, because in this case every element of $B(\Gamma_{n+1})\setminus B_1$ has decomposition consisting of two elements ($S\sqcup \{n+1\}$, where $\{n+1\},S\in B_1$) and we know the combinatorial type of shaved off faces, which have codimension 2.

\subsection{Associahedra}
Let us apply the construction \ref{ind} to the path graph $L_n$. Here $V=\{n\}$ and $\mathcal{S}=\{[i,n], i=2,\ldots n\}$. Therefore, the shaved off faces are $As^{i-1}\times As^{n-i-1}, i=1,\ldots,n-1$ and we obtain inductive formulas:
\begin{align}
\gamma(As^n)&=\gamma(As^{n-1})+\tau\sum_{i=1}^{n-1}\gamma(As^{i-1})\gamma(As^{n-i-1});\label{associahedra}\\
H(As^n)&=(\alpha+t)H(As^{n-1})+\alpha t\sum_{i=1}^{n-1}H(As^{i-1}) H(As^{n-i-1}).
\end{align}
The inductive formulas for associahedra are equivalent to the equations:
\begin{align*}
\gamma_{As}(x) &= 1 + x \gamma_{As}(x) + \tau x^2 \gamma_{As}^2(x)&\text{, where }\qquad \gamma_{As}(x) = \sum_{n=0}^\infty \gamma(As^n)x^n;\\
H_{As}(x) &= (1 + \alpha x H_{As}(x))(1 + t x H_{As}(x))&\text{, where }\qquad H_{As}(x) = \sum_{n=0}^\infty H(As^n)x^n.
\end{align*}
The last equation is equivalent to:
\begin{equation*}
\frac{x H_{As}(x)}{(1 + \alpha x H_{As}(x))(1 + t x H_{As}(x))}=x.
\end{equation*}
Set $U(x)=x H_{As}(x)$. Then, $U(0)=0, U'(0)=1$ and
\begin{equation*}
\frac{U}{(1+\alpha U)(1+t U)}=x.
\end{equation*}
Applying the classical Lagrange Inversion Formula we obtain:
\begin{multline*}
U(x) = -\frac{1}{2\pi i}\oint_{|z| = \varepsilon}
\ln\left[1-\frac{x}{z}(1+\alpha z)(1+t z)\right]dz=\\= \sum_{n=1}^\infty\left(\frac{1}{2\pi i}\oint_{|z| = \varepsilon}\left[
\frac{(1+\alpha z)^n(1+t z)^n}{z^n}\right]dz\right) \frac{x^n}{n}=\\= \sum_{n=1}^\infty\left(
\sum_{i+j=n-1}\binom{n}{i}\binom{n}{j}\alpha^it^j \right)
\frac{x^n}{n}.
\end{multline*}
Therefore,
\begin{equation*}
H(As^n) = \frac{1}{n+1}\sum_{i+j=n} \binom{n+1}{i} \binom{n+1}{j}
\alpha^it^j = \frac{1}{n+1}\sum_{i=0}^n \binom{n+1}{i} \binom{n+1}{i+1}
\alpha^{n-i}t^i.
\end{equation*}

\begin{lem}\label{as}
For every connected graph $\Gamma_{n+1}$ on $[n+1]$ we have $\gamma_i(P_{\Gamma_{n+1}})\geq\gamma_i(As^n)$.
\end{lem}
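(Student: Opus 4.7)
The plan is to induct on $n$, proving $\gamma_i(P_{\Gamma_{n+1}}) \geq \gamma_i(As^n)$ for \emph{all} connected graphs on $[n+1]$ (not only trees --- the reason will emerge below). The base cases $n=0,1$ are immediate since the only connected graph on $[n+1]$ is a path. For the inductive step, first reduce an arbitrary connected $\Gamma_{n+1}$ to a spanning tree: pick any spanning tree $T \subseteq \Gamma_{n+1}$; a set $S$ for which $T|_S$ is connected also satisfies $\Gamma_{n+1}|_S$ connected, so $B(T) \subseteq B(\Gamma_{n+1})$. Both $P_T$ and $P_{\Gamma_{n+1}}$ are graph-associahedra and hence flag, so Theorem \ref{leq} gives $\gamma_i(P_T) \leq \gamma_i(P_{\Gamma_{n+1}})$. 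It therefore suffices to prove $\gamma_i(P_T) \geq \gamma_i(As^n)$ for every tree $T$ on $[n+1]$.

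For a tree $T$, pick any leaf and relabel so that it is the vertex $n+1$, with unique neighbor $v \in [n]$; set $T' = T|_{[n]}$. Since $V = \{v\}$ trivially induces a complete subgraph, Construction \ref{ind} applies and yields
\[
\gamma(P_T) \;=\; \gamma(P_{T'}) + \tau \sum_{S \in \mathcal{S}} \gamma(P_{T'|_S})\,\gamma(P_{T'/S}),
\]
where $\mathcal{S}$ consists of the connected subsets of $T'$ that contain $v$ and are proper in $[n]$. This formula is structurally identical to the associahedral recursion (\ref{associahedra}), in which the role of $\mathcal{S}$ is played by the intervals $[i,n]$ for $2 \leq i \leq n$ --- one subset of each cardinality $k \in \{1,\ldots,n-1\}$.

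The combinatorial heart of the argument is that for every $k \in \{1,\ldots,n-1\}$ the family $\mathcal{S}$ contains at least one set $S_k$ with $|S_k| = k$: start from $\{v\}$ and greedily adjoin a vertex adjacent to the current set, which is always possible while the set is proper because $T'$ is connected. The restriction $T'|_{S_k}$ is a subtree on $k$ vertices and the contraction $T'/S_k$ is a connected graph on $n-k$ vertices (Remark \ref{rem}), so the inductive hypothesis yields $\gamma(P_{T'|_{S_k}}) \geq \gamma(As^{k-1})$ and $\gamma(P_{T'/S_k}) \geq \gamma(As^{n-k-1})$. Combined with $\gamma(P_{T'}) \geq \gamma(As^{n-1})$ and the nonnegativity of all $\gamma$-vectors involved (Theorem \ref{Gal}), these estimates multiply and sum termwise to give $\gamma(P_T) \geq \gamma(As^{n-1}) + \tau \sum_{k=1}^{n-1} \gamma(As^{k-1})\gamma(As^{n-k-1}) = \gamma(As^n)$. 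The main subtlety worth flagging is that $T'/S_k$ is typically \emph{not} a tree --- the neighbors of the contracted subtree become a clique in $T'/S_k$ --- which is precisely why the induction must be phrased for all connected graphs rather than only trees, with the spanning-tree reduction at the outer level closing the loop.
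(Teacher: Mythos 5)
Your proof is correct and follows essentially the same route as the paper: reduce to a spanning tree via Theorem \ref{leq}, strip off a leaf using Construction \ref{ind}, compare the resulting recursion \eqref{formulas} to the associahedral recursion \eqref{associahedra} by picking one $S\in\mathcal{S}$ of each cardinality $1,\ldots,n-1$, and invoke the inductive hypothesis on $\Gamma_n|_S$ and $\Gamma_n/S$ together with nonnegativity of $\gamma$-vectors (Theorem \ref{Gal}) to discard the extra terms. You merely spell out two points the paper leaves implicit --- the greedy argument that $\mathcal{S}$ contains a set of each size, and the observation that $\Gamma_n/S$ is generally not a tree, which is why the inductive statement must be phrased over all connected graphs with Remark \ref{rem} supplying the needed connectivity.
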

\begin{proof}
Notice, that it is enough to prove the lemma for trees. Indeed, for every connected graph $\Gamma$ there exists a tree $T\subseteq\Gamma$ on the same nodes. Then, $B(T)\subseteq B(\Gamma)$ and we can apply Theorem \ref{leq}.

For $n=1$ there is nothing to prove.

Assume that the lemma holds for $m\leq n$. Let $\Gamma_{n+1}$ be a tree on $[n+1]$. Without loss of generality, assume that $\{n+1\}$ is adjacent only to $\{n\}$. Then, we can use the construction \ref{ind} putting $\Gamma_n=\Gamma_{n+1}\setminus\{n+1\}$ and $V=\{n\}$. For every $i\in[1,n-1]$ there exists a connected subgraph of $\Gamma_n$ on $i$ vertexes containing $\{n\}$, i.e. there exists $S\in \mathcal{S}\colon |S|=i$. Therefore, comparing \eqref{formulas} to \eqref{associahedra} and using the inductive assumption and remark \ref{rem} we obtain the lemma.
\end{proof}

\subsection{Cyclohedra}
Let $C_{n+1}$ be a cyclic graph on $[n+1]$. We apply the construction different from the construction \ref{ind}.

According to proposition \ref{eq1}, the building set $B_1=B(C_n)(1,\ldots,n-1,J(n,n+1))$ corresponds to $P_{B_1}=Cy^{n-1}\times I$: the bottom and top bases of $P_{\Gamma_n}\times I$ are $\{n\},\{n+1\}\in B_1$; the side facets are $S\in B_1\setminus[n+1]$ such that $\{n,n+1\}$ is either contained in $S$ or doesn't intersect $S$. Side facets $S$ are identified with facets of $Cy^{n-1}=P_{C_n}$ by collapsing $\{n,n+1\}$ to the point $\{n\}$.

$B(C_{n+1})\setminus B_1=\{S\sqcup\{n\},S\in \mathcal{S}_n\}\cup\{S\sqcup\{n+1\},S\in \mathcal{S}_{n+1}\}$, where $\mathcal{S}_n=\{[i,n-1], i=1,\ldots,n-1\}$ and $S_{n+1}=\{[1,i], i=1,\ldots,n-1\}$.
By Theorem~\ref{fm}, $Cy^n$ is obtained from $Cy^{n-1}\times I$ by shaving intersections of the bottom base $F_{\{n\}}$ with the side facets $F_S$ for $S\in\mathcal{S}_n$ and intersections of the top base $F_{\{n+1\}}$ with the side faces $F_S$ for $S\in\mathcal{S}_{n+1}$. Since bases don't change after shaving their facets, the shaved off faces are exactly $P_{C_n|_S}\times P_{C_n/S}, S\in\mathcal{S}_n\cup\mathcal{S}_{n+1}$, which are $As^{i-1}\times Cy^{n-i-1}, i=1,\ldots,n-1$ in the top and the same type faces in the bottom. By proposition \ref{shave}, we obtain inductive formulas:

\begin{align}
\gamma(Cy^n)&=\gamma(Cy^{n-1})+2\tau\sum_{i=1}^{n-1}\gamma(As^{i-1})\gamma(Cy^{n-i-1});\\
H(Cy^n)&=(\alpha+t)H(Cy^{n-1})+2\alpha t\sum_{i=1}^{n-1}H(As^{i-1}) H(Cy^{n-i-1}).
\end{align}
The inductive formulas for cyclohedra are equivalent to the equations:
\begin{align*}
\gamma_{Cy}(x) &= 1 + x \gamma_{Cy}(x) + \tau x^2 \gamma_{As}(x)\gamma_{Cy}(x)&\text{, where }\qquad \gamma_{Cy}(x) = \sum_{n=0}^\infty \gamma(Cy^n)x^n;\\
H_{Cy}(x) &= 1 + (\alpha+t) x H_{Cy}(x) + 2 \alpha t x^2 H_{As}(x)H_{Cy}(x)&\text{, where }\qquad H_{Cy}(x) = \sum_{n=0}^\infty H(Cy^n)x^n.
\end{align*}
Set $V(x) = x H_{Cy}(x)$. Then,
\begin{equation*}
\frac{V}{1+(\alpha+t)V+2\alpha t U V}=x.
\end{equation*}
Therefore,
\begin{equation*}
\frac{U}{(1+\alpha U)(1+t U)} = \frac{V}{1+(\alpha+t)V+2\alpha t U V}.
\end{equation*}
And we obtain:
\begin{equation*}
V = \frac{U}{1-\alpha t U^2}.
\end{equation*}

\subsection{Permutohedra}
Let us apply the construction \ref{ind} to the complete graph $K_n$. Here $V=[n]$ and $\mathcal{S}=2^{[n]}\setminus\{\emptyset,[n]\}$. Therefore, we shave off $\binom{n}{i}$ faces of the type $Pe^{i-1}\times Pe^{n-i-1}, i=1,\ldots,n-1$ and obtain inductive formulas:

\begin{align}
\gamma(Pe^n)&=\gamma(Pe^{n-1})+\tau\sum_{i=1}^{n-1}\binom{n}{i} \gamma(Pe^{i-1})\gamma(Pe^{n-i-1});\\
H(Pe^n)&=(\alpha+t)H(Pe^{n-1})+\alpha t\sum_{i=1}^{n-1}\binom{n}{i} H(Pe^{i-1}) H(Pe^{n-i-1}).
\end{align}
The inductive formulas for permutohedra are equivalent to the differential equations:
\begin{align*}
\frac{d\gamma_{Pe}(x)}{dx} &= 1 + \gamma_{Pe}(x) + \tau \gamma_{Pe}^2(x)&\text{, where }\qquad \gamma_{Pe}(x) = \sum_{n=0}^\infty \gamma(Pe^n)\frac{x^{n+1}}{(n+1)!};\\
\frac{dH_{Pe}(x)}{dx} &= (1 + \alpha H_{Pe}(x))(1 + t H_{Pe}(x))&\text{, where }\qquad H_{Pe}(x) = \sum_{n=0}^\infty H(Pe^n)\frac{x^{n+1}}{(n+1)!}.
\end{align*}
One can explicitly solve the last equation and obtain:
\begin{equation*}
H_{Pe}(x)=\frac{e^{\alpha x}-e^{t x}}{\alpha e^{t x}-t e^{\alpha x}}.
\end{equation*}
Let $A(n,k)=|\{\sigma\in \Sym(n)\colon \des(\sigma)=k\}|$, then
\begin{equation*}
H(Pe^n)=\sum_{i=0}^{n} A(n+1,k)\alpha^k t^{n-k}.
\end{equation*}

\subsection{Stellohedra}
Let us apply the construction \ref{ind} to the complete bipartite graph $K_{1,n-1}$ or $(n-1)$-star. Here $V=\{1\}$ and $\mathcal{S}=\{\{1\}\cup S, S\subsetneq [2,n]\}$. Therefore, we shave off $\binom{n-1}{i-1}$ faces of the type $St^{i-1}\times Pe^{n-i-1},\\i=1,\ldots,n-1$ and obtain inductive formulas:

\begin{align}
\gamma(St^n)&=\gamma(St^{n-1})+\tau\sum_{i=1}^{n-1}\binom{n-1}{i-1} \gamma(St^{i-1})\gamma(Pe^{n-i-1});\label{stellohedra}\\
H(St^n)&=(\alpha+t)H(St^{n-1})+\alpha t\sum_{i=1}^{n-1}\binom{n-1}{i-1} H(St^{i-1}) H(Pe^{n-i-1}).
\end{align}
The inductive formulas for stellohedra are equivalent to the differential equations:
\begin{align*}
\frac{d\gamma_{St}(x)}{dx} &= \gamma_{St}(x)(1 + \tau x \gamma_{Pe}(x))&\text{, where }\qquad \gamma_{St}(x) = \sum_{n=0}^\infty \gamma(St^n)\frac{x^n}{n!};\\
\frac{dH_{St}(x)}{dx} &= H_{St}(x)(\alpha+t + \alpha t x H_{Pe}(x))&\text{, where }\qquad H_{St}(x) = \sum_{n=0}^\infty H(St^n)\frac{x^n}{n!}.
\end{align*}

\begin{lem}\label{st}
For every tree $\Gamma_{n+1}$ on $[n+1]$ we have $\gamma_i(P_{\Gamma_{n+1}})\leq\gamma_i(St^n)$.
\end{lem}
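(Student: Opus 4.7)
The plan is to mirror the proof of Lemma \ref{as}: induct on $n$, apply Construction \ref{ind} to a leaf of $\Gamma_{n+1}$, and compare the resulting inductive formula for $\gamma(P_{\Gamma_{n+1}})$ term by term with the stellohedron recursion \eqref{stellohedra}.

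For the inductive step, let $\Gamma_{n+1}$ be a tree on $[n+1]$ and, without loss of generality, assume $\{n+1\}$ is a leaf adjacent to $\{n\}$. Set $\Gamma_n=\Gamma_{n+1}\setminus\{n+1\}$ and $V=\{n\}$ (which trivially induces a complete subgraph). Construction \ref{ind} gives
\begin{equation*}
\gamma(P_{\Gamma_{n+1}})=\gamma(P_{\Gamma_n})+\tau\sum_{S\in\mathcal{S}}\gamma(P_{\Gamma_n|_S})\,\gamma(P_{\Gamma_n/S}),
\end{equation*}
where $\mathcal{S}$ consists of the connected subtrees of $\Gamma_n$ that contain the vertex $n$ and are proper subsets of $[n]$. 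Since $\Gamma_n$ is a tree, so is each $\Gamma_n|_S$, and $\Gamma_n/S$ is a tree as well (contracting a connected subtree of a tree yields a tree). Hence by the inductive hypothesis $\gamma_i(P_{\Gamma_n})\leq\gamma_i(St^{n-1})$ and $\gamma_i(P_{\Gamma_n|_S})\leq\gamma_i(St^{|S|-1})$, while Theorem \ref{leq} (applied to $B(\Gamma_n/S)\subseteq B(K_{n-|S|})$ among flag nestohedra) yields $\gamma_i(P_{\Gamma_n/S})\leq\gamma_i(Pe^{n-|S|-1})$.

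Combining these termwise and grouping $S\in\mathcal{S}$ by cardinality, it suffices to show
\begin{equation*}
\#\{S\in\mathcal{S}:|S|=i\}\leq\binom{n-1}{i-1},\qquad i=1,\ldots,n-1,
\end{equation*}
since the stellohedron recursion has exactly $\binom{n-1}{i-1}$ summands of type $\gamma(St^{i-1})\gamma(Pe^{n-i-1})$. This bound is immediate: any $S\in\mathcal{S}$ of size $i$ contains the vertex $n$ together with a choice of $i-1$ further vertices from the $n-1$ available in $[n-1]$, and at most all such $\binom{n-1}{i-1}$ subsets can yield connected subtrees.

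The only point that needs care is the last combinatorial bookkeeping; everything else is a direct application of already established tools (Construction \ref{ind}, Theorem \ref{leq}, the explicit stellohedron recursion \eqref{stellohedra}, and the fact that trees are closed under both induced subgraphs and contraction of connected subsets). The main obstacle is thus purely organizational, namely arranging the pairing between the summands in the two recursions so that the inequalities combine in the right order; the counting bound $\binom{n-1}{i-1}$ is sharp exactly when $\Gamma_n$ is the star with center $n$, which is precisely why the stellohedron sits at the top.
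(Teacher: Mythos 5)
Your proof is correct and follows the paper's own approach: induct on the number of nodes, apply Construction~\ref{ind} at a leaf, bound $\gamma(P_{\Gamma_n|_S})$ by the inductive hypothesis and $\gamma(P_{\Gamma_n/S})$ by Theorem~\ref{leq} against the permutohedron, and observe that at most $\binom{n-1}{i-1}$ sets $S\in\mathcal{S}$ have cardinality $i$. The one slip is your parenthetical claim that $\Gamma_n/S$ is again a tree --- for the building-set contraction defined here this is false (contracting the center vertex of a star yields a complete graph on the remaining nodes) --- but it does no damage, since you correctly invoke Theorem~\ref{leq} rather than the inductive hypothesis for the contraction factor.
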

\begin{proof}
For $n=1$ there is nothing to prove.

Assume that the lemma holds for $m\leq n$. Let $\Gamma_{n+1}$ be a tree on $[n+1]$. Without loss of generality, assume that $\{n+1\}$ is adjacent only to $\{n\}$. Then, we can use the construction \ref{ind} putting $\Gamma_n=\Gamma_{n+1}\setminus\{n+1\}$ and $V=\{n\}$. For every $i\in[1,n-1]$ there are no more than $\binom{n-1}{i-1}$ elements $S\in \mathcal{S}\colon |S|=i$ and for each such $S$ we have $\gamma(P_{\Gamma_n|_S})\gamma(P_{\Gamma_n/S})\leq\gamma(St^{i-1})\gamma(Pe^{n-i-1})$. Therefore, comparing \eqref{formulas} to \eqref{stellohedra} and using the inductive assumption we obtain the lemma.
\end{proof}

\section{Bounds of face polynomials}
\begin{defn}
The graph $\Gamma$ is called Hamiltonian if it contains a Hamiltonian cycle, i.e. a closed loop that visits each vertex of $\Gamma$ exactly once.
\end{defn}

Summarizing Lemmas 1-3 and Theorems 1-2 we obtain the following results:
\newpage
\begin{thm}
For any flag $n$-dimmensional nestohedron $P_B$ we have:
\begin{enumerate}
\item[1)] $\gamma_i(I^n)\leq\gamma_i(P_B)\leq\gamma_i(Pe^n)$;
\item[2)] $g_i(I^n)\leq g_i(P_B)\leq g_i(Pe^n)$;
\item[3)] $h_i(I^n)\leq h_i(P_B)\leq h_i(Pe^n)$;
\item[4)] $f_i(I^n)\leq f_i(P_B)\leq f_i(Pe^n)$.
\end{enumerate}
\end{thm}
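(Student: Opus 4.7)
The overall strategy is to reduce the full statement to part 1) via Lemma \ref{g}, then establish the two $\gamma$-vector inequalities separately using Theorem \ref{Gal} and Theorem \ref{leq}. The reduction is automatic: Lemma \ref{g} says that a componentwise inequality $\gamma(P_1)\leq\gamma(P_2)$ forces the analogous componentwise inequalities on $g$, $h$ and $f$, so applying it to the pairs $(I^n,P_B)$ and $(P_B,Pe^n)$ deduces 2), 3) and 4) from 1). The actual work is therefore concentrated on part 1).

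For the upper bound I would observe that the graphical building set $B(K_{n+1})$ of the complete graph consists of \emph{every} nonempty subset of $[n+1]$, since every induced subgraph of $K_{n+1}$ is connected. Consequently, for any connected building set $B$ on $[n+1]$ we have the trivial inclusion $B\subseteq B(K_{n+1})$. The polytope $P_B$ is flag by hypothesis, and $Pe^n=P_{K_{n+1}}$ is flag because every graph-associahedron is flag. Theorem \ref{leq} then applies to the pair $(B,B(K_{n+1}))$ and yields $\gamma_i(P_B)\leq\gamma_i(Pe^n)$.

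For the lower bound I would compute $\gamma(I^n)$ directly from $H(I^n)=(\alpha+t)^n$, which gives $\gamma_0(I^n)=1$ and $\gamma_i(I^n)=0$ for $i\geq 1$. Since $\gamma_0(P)=1$ for every simple polytope and Theorem \ref{Gal} guarantees $\gamma_i(P_B)\geq 0$ for all $i$ (this is where flagness of $P_B$ is used), the inequality $\gamma_i(I^n)\leq\gamma_i(P_B)$ follows immediately. Combining both bounds establishes 1), and Lemma \ref{g} then delivers 2)--4).

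I do not foresee a substantive obstacle; the theorem is essentially a bookkeeping of the machinery already assembled. The only points requiring care are verifying the flag hypothesis at both ends of Theorem \ref{leq} (which is free once one recalls that graph-associahedra are flag) and noting that $\gamma(I^n)$ is supported entirely in degree zero, so the lower bound reduces to Gal-type nonnegativity rather than to a monotonicity comparison.
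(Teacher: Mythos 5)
Your proposal is correct and follows essentially the same route as the paper: the paper itself just states that the result is obtained by "summarizing Lemmas 1--3 and Theorems 1--2," which is precisely the combination you identify — Theorem \ref{Gal} for the lower $\gamma$-bound (since $\gamma_i(I^n)=0$ for $i>0$), Theorem \ref{leq} applied to $B\subseteq B(K_{n+1})$ for the upper $\gamma$-bound (with flagness of $P_B$ by hypothesis and of $Pe^n$ as a graph-associahedron), and Lemma \ref{g} to propagate the $\gamma$-inequalities to $g$, $h$, $f$.
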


\begin{thm}
For any connected graph $\Gamma_{n+1}$ on $[n+1]$ we have:
\begin{enumerate}
\item[1)] $\gamma_i(As^n)\leq\gamma_i(P_{\Gamma_{n+1}})\leq\gamma_i(Pe^n)$;
\item[2)] $g_i(As^n)\leq g_i(P_{\Gamma_{n+1}})\leq g_i(Pe^n)$;
\item[3)] $h_i(As^n)\leq h_i(P_{\Gamma_{n+1}})\leq h_i(Pe^n)$;
\item[4)] $f_i(As^n)\leq f_i(P_{\Gamma_{n+1}})\leq f_i(Pe^n)$.
\end{enumerate}
\end{thm}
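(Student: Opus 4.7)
The plan is to obtain the theorem as a direct packaging of Lemmas~\ref{g}, \ref{as} and Theorem~\ref{leq}, with no fresh combinatorial work. The first reduction is to prove only part 1), since Lemma~\ref{g} converts a componentwise inequality of $\gamma$-vectors between simple $n$-polytopes into the corresponding inequalities for $g$-, $h$- and $f$-vectors; applying it in turn to each side of part 1) immediately gives parts 2), 3), 4).

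For the lower bound $\gamma_i(As^n)\le\gamma_i(P_{\Gamma_{n+1}})$ there is nothing new to do: this is precisely the statement of Lemma~\ref{as}, proved inductively via Construction~\ref{ind}.

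For the upper bound $\gamma_i(P_{\Gamma_{n+1}})\le\gamma_i(Pe^n)$, the key observation is the inclusion of building sets $B(\Gamma_{n+1})\subseteq B(K_{n+1})$. Indeed, if $S\subseteq[n+1]$ induces a connected subgraph of $\Gamma_{n+1}$, then $S$ induces the complete (hence connected) subgraph $K_{|S|}$ of $K_{n+1}$, so $S\in B(K_{n+1})$. Both building sets are connected because $\Gamma_{n+1}$ and $K_{n+1}$ are connected, and the proposition that every graph-associahedron is flag ensures that $P_{\Gamma_{n+1}}$ and $Pe^n=P_{K_{n+1}}$ are flag nestohedra. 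Theorem~\ref{leq} then applies and delivers the required inequality.

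Since the whole statement is essentially a corollary of previously established results, I do not foresee a genuine obstacle. The only points that deserve explicit verification before invoking Theorem~\ref{leq} are the hypotheses it requires: connectedness of both building sets (automatic from connectedness of $\Gamma_{n+1}$ and $K_{n+1}$) and flagness of both nestohedra (automatic from the graph-associahedron proposition). Once these are checked, the four parts follow by one application of Lemma~\ref{as}, one application of Theorem~\ref{leq}, and two applications of Lemma~\ref{g}.
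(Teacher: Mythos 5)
Your proposal is correct and follows exactly the route the paper intends: the paper introduces this theorem with ``Summarizing Lemmas 1--3 and Theorems 1--2 we obtain the following results,'' and your assembly — Lemma~\ref{as} for the lower $\gamma$-bound, $B(\Gamma_{n+1})\subseteq B(K_{n+1})$ together with Theorem~\ref{leq} for the upper $\gamma$-bound (with flagness and connectedness of both building sets duly checked), and Lemma~\ref{g} to propagate the $\gamma$-inequalities to the $g$-, $h$- and $f$-vectors — is precisely that summary made explicit.
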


\begin{thm}\label{cy}
For any Hamiltonian graph $\Gamma_{n+1}$ on $[n+1]$ we have:
\begin{enumerate}
\item[1)] $\gamma_i(Cy^n)\leq\gamma_i(P_{\Gamma_{n+1}})\leq\gamma_i(Pe^n)$;
\item[2)] $g_i(Cy^n)\leq g_i(P_{\Gamma_{n+1}})\leq g_i(Pe^n)$;
\item[3)] $h_i(Cy^n)\leq h_i(P_{\Gamma_{n+1}})\leq h_i(Pe^n)$;
\item[4)] $f_i(Cy^n)\leq f_i(P_{\Gamma_{n+1}})\leq f_i(Pe^n)$.
\end{enumerate}
\end{thm}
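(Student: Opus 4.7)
The plan is to reduce everything to part 1 (the $\gamma$-vector bounds) and then invoke Lemma \ref{g} to propagate the inequalities to the $g$-, $h$- and $f$-vectors. The key enabling facts are that every graph-associahedron is a flag nestohedron (stated earlier in the excerpt) and that Theorem \ref{leq} provides the monotonicity of $\gamma$-vectors under inclusion of connected flag building sets.

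For the upper bound in part 1, I would observe that any connected graph $\Gamma_{n+1}$ on $[n+1]$ is a subgraph of the complete graph $K_{n+1}$, which immediately gives $B(\Gamma_{n+1})\subseteq B(K_{n+1})$; both building sets are connected and the corresponding graph-associahedra are flag, so Theorem \ref{leq} yields $\gamma_i(P_{\Gamma_{n+1}})\leq\gamma_i(P_{K_{n+1}})=\gamma_i(Pe^n)$. Taking $\Gamma_{n+1}=K_{n+1}$ shows the bound is attained, hence unimprovable.

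For the lower bound, the hypothesis that $\Gamma_{n+1}$ is Hamiltonian is used in exactly one place: it supplies a cyclic subgraph on all $n+1$ vertices, so up to relabeling of the vertex set (which does not affect the combinatorial type of $P_{\Gamma_{n+1}}$) one has $C_{n+1}\subseteq\Gamma_{n+1}$, and therefore $B(C_{n+1})\subseteq B(\Gamma_{n+1})$. Applying Theorem \ref{leq} once more gives $\gamma_i(Cy^n)=\gamma_i(P_{C_{n+1}})\leq\gamma_i(P_{\Gamma_{n+1}})$, and equality for $\Gamma_{n+1}=C_{n+1}$ shows this is unimprovable as well.

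Parts 2, 3, 4 then follow immediately from Lemma \ref{g} applied to both pairs $(Cy^n,P_{\Gamma_{n+1}})$ and $(P_{\Gamma_{n+1}},Pe^n)$, since that lemma converts componentwise $\gamma$-vector inequalities into componentwise $g$-, $h$- and $f$-vector inequalities. There is no real technical obstacle here: the argument is a direct assembly of the tools already developed in the excerpt. The only conceptual point worth flagging is that the Hamiltonian hypothesis is precisely what guarantees $B(C_{n+1})\subseteq B(\Gamma_{n+1})$ on the same ground set; without it the chain breaks and $\gamma(Cy^n)$ cannot serve as a uniform lower bound, which is why the statement is formulated for Hamiltonian graphs and not for arbitrary connected ones (for which the sharp lower bound, by Lemma \ref{as}, is $\gamma(As^n)$ instead).
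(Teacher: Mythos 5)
Your proposal is correct and follows essentially the same route as the paper: the lower bound comes from $B(C_{n+1})\subseteq B(\Gamma_{n+1})$ via Theorem~\ref{leq} (which is exactly the paper's explicit proof of Theorem~\ref{cy}), the upper bound from $B(\Gamma_{n+1})\subseteq B(K_{n+1})$, and parts 2)--4) follow by Lemma~\ref{g}.
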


\begin{thm}
For any tree $\Gamma_{n+1}$ on $[n+1]$ we have:
\begin{enumerate}
\item[1)] $\gamma_i(As^n)\leq\gamma_i(P_{\Gamma_{n+1}})\leq\gamma_i(St^n)$;
\item[2)] $g_i(As^n)\leq g_i(P_{\Gamma_{n+1}})\leq g_i(St^n)$;
\item[3)] $h_i(As^n)\leq h_i(P_{\Gamma_{n+1}})\leq h_i(St^n)$;
\item[4)] $f_i(As^n)\leq f_i(P_{\Gamma_{n+1}})\leq f_i(St^n)$.
\end{enumerate}
\end{thm}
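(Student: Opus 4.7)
The plan is to obtain this theorem as a direct corollary of the machinery already assembled in the paper, since every ingredient is in place. The key observation is that parts 2), 3), 4) follow mechanically from part 1) via Lemma~\ref{g}, so the whole task reduces to establishing the two $\gamma$-inequalities $\gamma_i(As^n)\leq\gamma_i(P_{\Gamma_{n+1}})\leq\gamma_i(St^n)$ for trees.

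For the lower bound, I would simply invoke Lemma~\ref{as}: it asserts $\gamma_i(As^n)\leq\gamma_i(P_{\Gamma_{n+1}})$ for \emph{any} connected graph on $[n+1]$, which in particular covers all trees. No additional work is needed here. For the upper bound, I would invoke Lemma~\ref{st}, which was proved specifically for trees: it gives $\gamma_i(P_{\Gamma_{n+1}})\leq\gamma_i(St^n)$. Together these two lemmas yield part 1) of the theorem.

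With part 1) established, parts 2), 3), 4) follow by applying Lemma~\ref{g} twice: once to the pair $(As^n,P_{\Gamma_{n+1}})$ to transfer the $\gamma$-inequality to the corresponding $g$-, $h$-, $f$-inequalities on the left, and once to the pair $(P_{\Gamma_{n+1}},St^n)$ to transfer it on the right. Since the implications $\gamma\Rightarrow g\Rightarrow h\Rightarrow f$ in Lemma~\ref{g} are componentwise and monotone, each of the chains in parts 2)--4) drops out.

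I do not anticipate any genuine obstacle: the real content of the theorem was already packaged into Lemmas~\ref{as} and~\ref{st}, where the inductive construction \ref{ind} combined with the shaving identity of Proposition~\ref{shave} and Theorem~\ref{leq} (to reduce connected graphs to spanning trees in the lower bound) did all the work. The only mild subtlety worth spelling out in the write-up is that Lemma~\ref{as} was proved in the generality of connected graphs, so the lower bound is automatic for trees; by contrast the upper bound in Lemma~\ref{st} genuinely requires the tree hypothesis (otherwise the counting $|\{S\in\mathcal{S}\colon|S|=i\}|\leq\binom{n-1}{i-1}$ used in the induction would fail), and this is why part of the theorem is restricted to trees rather than to all connected graphs.
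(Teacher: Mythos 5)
Your proposal is correct and matches the paper's own argument: the paper states that the four bound theorems follow by ``Summarizing Lemmas 1--3 and Theorems 1--2,'' and for the tree case this is precisely Lemma~\ref{as} for the lower bound, Lemma~\ref{st} for the upper bound, and Lemma~\ref{g} to pass from the $\gamma$-inequalities to the $g$-, $h$-, and $f$-inequalities, exactly as you describe.
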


\begin{proof}[Proof of Theorem \ref{cy}]
Since $\Gamma_{n+1}$ is Hamiltonian, there exists a cyclic subgraph $C_{n+1}\subseteq\Gamma_{n+1}$. Therefore, $B(C_{n+1})\subseteq B(\Gamma_{n+1})$ and Theorem \ref{leq} allows to finish the proof.
\end{proof}

The mentioned bounds can be written explicitly using results about $f$-,$h$-,$g$- and $\gamma$-vectors of the considered series (cf. \cite{PRW} and \cite{Bu1}):

\begin{gather*}
    h_i(I^n)=\binom{n}{i};\quad h_i(As^n)=\frac{1}{n+1}\binom{n+1}{i}\binom{n+1}{i+1};\quad h_i(Cy^n)=\binom{n}{i}^2;\\\\
    h_i(Pe^n)=A(n+1,i);\quad h_i(St^n)=\sum_{k=i}^n\binom{n}{k}A(k,i-1), i>0;\\\\
    \gamma_i(I^n)=0, i>0;\quad\gamma_i(As^n)=\frac{1}{i+1}\binom{2i}{i}\binom{n}{2i};\quad\gamma_i(Cy^n)=\binom{n}{i,i,n-2i}.
\end{gather*}

The derived bounds for $f$-,$h$-,$g$- and $\gamma$-vectors give the bounds for the corresponding polynomials, which generating functions were obtained in \cite{Bu1}.

\footnotesize

\small\bigskip
\textsc{Steklov Mathematics Institute, Russian Academy of Sciences, Moscow, Russia}\\
\emph{E-mail adress:} \verb"buchstab@mi.ras.ru"\bigskip\\
\textsc{Department of Mechanics and Mathematics, Moscow State University, Moscow, Russia}\\
\emph{E-mail adress:} \verb"volodinvadim@gmail.com"
\end{document}